\newcommand{\R}{{\mathbb{R}}}
\newcommand{\Nst}{{\mathbb{N}}}
\newcommand{\LtR}{{L^2(\R)}}
\newcommand{\abs}[1]{\lvert#1\rvert}
\newcommand{\C}{{\mathbb{C}}}
\newcommand{\Biggnorm}[1]{\Biggl\|#1\Biggr\|}
\newcommand{\bigabs}[1]{\bigl\lvert#1\bigr\rvert} 
\newcommand{\biggabs}[1]{\biggl\lvert#1\biggr\rvert} 
\newcommand{\card}[1]{\lvert#1\rvert}
\newtheorem{theorem}{Theorem}[section]
\newtheorem{lemma}[theorem]{Lemma}
\newcommand{\be}{\begin{equation}}
\newcommand{\ee}{\end{equation}}
\def\ba#1\ea{\begin{align}#1\end{align}}
\def\mwlet{\psi}
\def\transl{\mathbf T}
\def\dil{\mathbf D}
\def\E{\mathbb{E}}
\def\ind{\mathbbm{1}}
\def\var{\operatorname{Var}}
\title[Filtering with Wavelet Zeros and GAFs]{Filtering with Wavelet Zeros and Gaussian Analytic Functions}
\author[L. D. Abreu]{Luis Daniel Abreu}
\email{labreau@kfs.oeaw.ac.at}
\address{Acoustics Research Institute, Austrian Academy of Sciences,
Wohl\-le\-ben\-gasse 12-14, 1040 Vienna, Austria}
\author[A. Haimi]{Antti Haimi}
\email{ahaimi@kfs.oeaw.ac.at}
\address{Acoustics Research Institute, Austrian Academy of Sciences,
Wohl\-le\-ben\-gasse 12-14, 1040 Vienna, Austria}
\author[G. Koliander]{G\"{u}nther Koliander}
\email{gkoliander@kfs.oeaw.ac.at}
\address{Acoustics Research Institute, Austrian Academy of Sciences,
Wohl\-le\-ben\-gasse 12-14, 1040 Vienna, Austria}
\author[J. L. Romero]{Jos\'{e} Luis Romero}
\email{jose.luis.romero@univie.ac.at, jlromero@kfs.oeaw.ac.at}
\address{Faculty of Mathematics, University of Vienna,
Oskar-Morgenstern-Platz 1, 1090 Vienna, Austria\\
and\\
Acoustics Research Institute, Austrian Academy of Sciences, Wohl\-leben\-gasse
12-14, 1040 Vienna, Austria}
\thanks{This work was supported by the Austrian Science Fund
(FWF): P 31153-N35 and P 29462-N35 and the Vienna Science and Technology Fund (WWTF): MA16-053.}%
\begin{document}

\begin{abstract}
  We present the continuous wavelet transform (WT) of white Gaussian noise and establish a connection to the theory of Gaussian analytic functions. 
  Based on this connection, we propose a methodology that detects components of a signal in white noise based on the distribution of the zeros of its continuous WT. 
  To illustrate that the continuous theory can be employed in a discrete setting, we establish a uniform convergence result for the discretized continuous WT and apply the proposed method to a variety of acoustic signals.
\end{abstract}

\maketitle

\section{Introduction}

Identifying important components of signals embedded in a noisy background is a
fundamental problem in signal analysis.
Most methods in the literature aim to identify the signal from their large components in some transform domain
 (corresponding to high energy regions as in (block-)thresholding methods \cite{Shrinkage,yumaba08}, curves as in ``synchrosqueezing'' \cite{dama96} or ``reassignment'' \cite{aufl95} methods \cite{auflli13}, or ridges \cite{cahwto97,Ridges}). 
In sharp contrast,
Flandrin introduced a novel method to identify a signal embedded in Gaussian
white noise based on its ``silent points''---the \emph{zeros} of the
short-time Fourier transform (STFT) \cite{flandrin2015time}. 
Similarly,  the
``loud-silent'' dichotomy represented by maxima and minima of Gaussian
spectrograms based on observation of their geometric distribution was studied in 
\cite{FlandrinMaximum,Silence}. 
Flandrin's analysis relies on the representation of an STFT with a Gaussian window as an entire function (up to a non-vanishing factor).

In Flandrin's method, the spectrogram of the clean signal is singled out as an area of statistical deviation from the pattern expected from noise. The intuition behind the filtering procedure is that, while the zeros of the STFT of pure Gaussian white noise with a Gaussian window are distributed according to a very regular random pattern in the plane, the presence of a deterministic signal
perturbs that pattern. 
This insight has been recently revisited in
\cite{bardenet2017zeros}, by noting that the spectrogram of white noise with
respect to a Gaussian window is a symmetric Gaussian entire function
and thus its
zero-set obeys well-known statistics \cite{feldheim2013zeroes}.

We propose a scheme similar to Flandrin's, but based on a
continuous \emph{wavelet} transform (WT) \cite[Ch.~2]{da92} with analyzing wavelets of the form
\begin{equation}
\label{eq_ga}
\widehat{\mwlet_{\alpha }}(\xi ):=
\begin{cases}
\xi^{\frac{\alpha
-1}{2}}e^{-\xi }, & \xi \geq 0, \\
0, & \xi <0,
\end{cases}
\end{equation}
with $\alpha >1$. 
The starting point of our analysis is the observation that
these windows lead to WTs that map (again up to a non-vanishing factor) into a  space of
analytic functions in the upper
half-plane \cite{dapa88}. 
As a
consequence, we identify the point process arising from the zeros of the
scalograms of  white noise with the zero set of a so-called \emph{hyperbolic Gaussian analytic function} (GAF), and use this information to
propose an adequate filtering procedure.

Similarly to the existing approach for the STFT case, we expect that large deviations from the zero pattern that is expected for white noise indicate signal components.
However, in contrast to 
\cite{flandrin2015time,Silence}, we do not rely on a triangulation to identify the corresponding regions of deviation.
Instead, we use estimated moment densities. 
More specifically, we calculate local approximations of the first intensity and the pair correlation function and compare them to the analytic form of these expressions for white noise derived from \cite{hough2009zeros}.
We expect that local deviations hint at signal components and propose a filtering procedure based on these deviations.

Our proposed methodology can also be applied to real signals that are not given as a continuous waveform but as a finite sequence of samples. 
We show that the discretized continuous WT of discrete white noise converge locally uniformly to the continuous WT, i.e., a hyperbolic GAF, with probability one.
Thus, the theory for the continuous setting can also be expected to hold approximately for the discrete case.
In simple experiments, we first show that for discrete white noise samples and the discrete CWT, we obtain statistics that remarkably well reflect the numbers proposed by continuous theory. 
To our knowledge, this is also the first faithful simulation of the hyperbolic GAF.
We further illustrate our methodology on acoustic signals that are superimposed with artificial white noise. 
We observe that although the theoretical results were derived only for complex white noise, the same behavior is observed for real white noise.

The paper is organized as follows. 
In Section \ref{sec:AIWT}, we define the analyticity inducing wavelet transform that will be used throughout the paper.
In Section \ref{sec_wn}, white noise is introduced, we extend the WT to white noise, and discuss the connection to Gaussian analytic functions.
The relation to the discrete domain is established in Section~\ref{sec:dwt}
where we prove that the discretized continuous WT of white noise converges to the  continuous WT of white noise.
In Section~\ref{sec:zerogaf}, we present basic properties of the zero set of a Gaussian analytic function, in particular, the first intensity function and the pair correlation function.
Local estimators for these functions based on a point pattern are then given in Sections~\ref{sec:estfif} and \ref{sec:estpcf}, respectively.
Finally, in Section~\ref{sec:experiments}, we present illustrative experiments before concluding the paper in Section~\ref{sec:conclusion}.

\section{Analyticity Inducing Wavelet Transform}\label{sec:AIWT}

Let $\mwlet\in\LtR$ such that its Fourier transform $\widehat{\mwlet}$ vanishes almost everywhere on 
  $\R^-$.  
  	The continuous WT  of a function (or signal) $s\in\LtR$ with respect to the \emph{mother wavelet} 
  $\mwlet$ is defined as
  \begin{equation}
    W_{\mwlet} s(x,y) = \langle s,\transl_x \dil_y \mwlet \rangle = \frac{1}{\sqrt{y}} \int_\R s(t)\overline{\mwlet\left(\frac{t-x}{y}\right)}\, dt, 
		\label{eq:defwltransform}
  \end{equation}
  for all
  $x\in\R$, $y\in\R^+$. 
  Here, $\transl_x$ and $\dil_y$ denote the translation and dilation operators, respectively,  given by 
   $(\transl_x s)(t) = s(t-x)$, and $(\dil_y s)(t) = y^{-1/2} s(t/ y)$ for all $t\in \R$.
  The \emph{admissibility constant} $C_{\mwlet}$ of a wavelet $\mwlet$ is defined as 
  \be
    C_{\mwlet} = \int_{\R^+} \frac{\abs{\widehat{\mwlet} (\xi) }^2}{ \xi} \, d\xi
  \ee
  and  $\mwlet$ is called admissible if $C_{\mwlet} < \infty$.

We are interested in mother wavelets $\mwlet$ such that the image of any function is (up to scaling) an analytic function in the complex variable $x+iy$.
The class of all $\mwlet$ satisfying this analyticity inducing property was recently characterized in \cite{hokopr19} and consists essentially of the Cauchy wavelets specified by \eqref{eq_ga} times a chirp (also known as ``Klauder wavelets'' \cite{fl98}).
More specifically, the function
\be
  x+iy  \mapsto y^{-\frac{\alpha}{2}} W_{\mwlet_{\alpha }} s(x,y)
\ee
is analytic for $\alpha>-1$ and $s\in \LtR$.
In the following, we consider the setting $\alpha>1$ for which the wavelets $\mwlet_{\alpha }$ are admissible.
In this case,  the operator 
$s \mapsto y^{-\frac{\alpha}{2}} W_{\mwlet_{\alpha }} s(x,y)$ is up to a constant an isometric isomorphism from $\LtR$ to 
the space $A^2_{\alpha}$  of all analytic functions $f$ on
$\Pi^{+}$ such that the norm
\begin{equation}
\lVert f\rVert _{A_{\alpha }^{2}(\Pi ^{+})}^{2}=\frac{1}{\pi }\int_{\Pi
^{+}}\left\vert f(x+iy)\right\vert ^{2}y^{\alpha -2}\,\mathrm{d}m(z)
\end{equation}
is finite \cite{dapa88}. Here, $\mathrm{d}m(z)=\mathrm{d}x\mathrm{d}y$ denotes the Lebesgue measure.
The spaces $A^2_{\alpha}$ are known as weighted Bergman spaces on the upper halfplane and are Hilbert spaces of analytic functions with inner product induced by $\lVert \cdot\rVert _{A_{\alpha }^{2}(\Pi ^{+})}$.

\section{Wavelet Transform of White Noise}\label{sec_wn} 

As a first step in our analysis, we introduce a rigorous definition of white noise.
Specifically, we adopt a Gaussian Hilbert space approach
\cite{janson1997gaussian}. 
Let $(\Omega ,\mathcal{F},\mathbb{P})$ be a
probability space. Heuristically, one thinks of white noise on ${\mathbb{R}}$
as a linear combination $\mathcal{N} =\sum_{n=0}^{\infty }a_{n}e_{n}$ where $a_{n}$
are independent standard (real or complex) Gaussians and $\{e_{n}:n\geq 0\}$
is an orthonormal basis of $\LtR$.
Unfortunately, this
sum does not converge in $\LtR$ with probability $1$ (the sequence $\{a_{n}\}_{n\in \mathbb{N}}\notin \ell^2$ with probability $1$ because $\sum_{n=0}^N\abs{a_n}^2$ follows a chi-square distribution with $N+1$ degrees of freedom and, thus, the probability that this sum is less than any finite constant decreases in $N$ towards 0).
However, for any $s\in \LtR$, the sum
\begin{equation}
\mathcal{N}(s):=\sum_{n= 0}^{\infty} a_{n}\langle s,e_{n}\rangle
\label{eq:noiseelements}
\end{equation}
converges in $L^{2}(\Omega ,\mathcal{F},\mathbb{P})$ to a complex Gaussian
variable with mean zero and variance $\Vert s\Vert ^{2}$. A precise
definition of white noise is then as the collection of random variables $G:=\{\mathcal{N}(s): s\in L^{2}(\mathbb{R})\}$. The space $G$ is a
\emph{Gaussian Hilbert space}, that is, a Hilbert space consisting of Gaussian
random variables. Its inner product is induced by
$\Vert \mathcal{N}(s)\Vert _{G}^{2}:=\Vert s\Vert ^{2}$.
We will call the white noise real or complex depending on whether the
variables $a_{n}$ are real or complex standard Gaussians. 



We next turn to the extension of the WT to white noise. 
We extend the WT with respect to the windows $\mwlet_{\alpha }$  to (real or
complex) white noise by
\begin{equation}
W_{\mwlet_{\alpha }}(\mathcal{N})(z)= \mathcal{N} (\transl_x \dil_y \mwlet_{\alpha })
=\sum_{n= 0}^{\infty} a_{n}\langle \transl_x \dil_y \mwlet_{\alpha },e_{n}\rangle
 =\sum_{n= 0}^{\infty}a_{n}W_{\mwlet_{\alpha }}e_{n}(z),
\label{eq:wavelet_wn}
\end{equation}
where $z=x+iy$. 
By the isometry property of the WT, there exists a constant $c_{\alpha }>0$ such that
 $\{b_n: n \geq 0\}$, with $b_n:=c_{\alpha } y^{-\frac{\alpha}{2}} W_{\mwlet_{\alpha }}(e_{n})$, is an orthonormal basis of  $A^2_{\alpha}$.
The series in \eqref{eq:wavelet_wn} can thus be rewritten as
\be
W_{\mwlet_{\alpha }}(\mathcal{N})(z)=\frac{y^{\frac{\alpha}{2}} }{c_{\alpha } }\sum_{n= 0}^{\infty}a_{n}b_{n}(z)
= \frac{y^{\frac{\alpha}{2}} }{c_{\alpha } } f_{\alpha}(z),
\label{eq:wavelet_wngaf}
\ee
where $f_{\alpha}:= \sum_{n=0}^{\infty }a_{n}b_{n}$ is a so-called hyperbolic Gaussian analytic function (GAF) \cite{hough2009zeros} on  $A^2_{\alpha}$.
We note that our setting of admissible wavelets (i.e., $\alpha>1$) does not include the determinantal point process case that would correspond to the case $\alpha=1$. 
Although the hyperbolic GAF is most commonly defined with the explicit basis%
\footnote{This corresponds to appropriately normalized monomials  when  mapped to the unit disk by the isomorphism $(\mathbf{T}^{\alpha }f)(z)=
\frac{2}{(1-z)^{\alpha }}f\big( i\frac{z+1}{1-z}\big)$.}
\be
  b_{n}(z) = 2^{\alpha -1}\sqrt{\frac{\Gamma (n+\alpha )}{n!\Gamma
(\alpha -1)}}\left( \frac{z-i}{z+i}\right) ^{n}\left( \frac{i}{z+i}\right)
^{\alpha } 
\ee
the statistical properties depend only on the Szeg\"o kernel \cite{elliott2011composition}
\be
  K(z,w) 
  = \sum_{n=0}^{\infty}b_n(z)\overline{b_n(w)}
  = \frac{2^{\alpha -2}(\alpha -1)}{(-i)^{\alpha }
(z-\bar{w})^{\alpha }}
\ee
and are thus independent of the chosen basis \cite{peres2005zeros}. 

Note that the  GAF $f_{\alpha}$ does not take values in $A^2_{\alpha}$ since, as explained above, $\{a_{n}\}_{n\in \mathbb{N}}\notin \ell^2$.
However, the defining series converges almost surely and locally uniformly to an analytic function \cite{hough2009zeros} and thus the zeros of a
realization are well defined.
We are interested in the distribution of zeros of the WT of white noise.
By  \eqref{eq:wavelet_wngaf}, we obtain that the set of zeros of
$W_{\mwlet_\alpha}(\mathcal{N})$, where $\mathcal{N}$ is complex white noise, has the same distribution as those of $f_\alpha$, the hyperbolic GAF associated with the Bergman space $A^2_\alpha$.
This observation was also recently made in \cite{baha18}.
We also point out that, while the characterization of analytic functions from their factorization in terms of zeros leave some multiplicative factors undetermined, 
the GAFs we are considering are completely determined by their zeros (see~\cite[Th.~6]{peres2005zeros}) with probability one.

\section{Discretization of White Noise}\label{sec:dwt}

One of the main motivations of this work is to use the theory for signals in  $\LtR$ and apply it to a signal that is given by samples on a finite interval.
In order to use the continuous theory, we thus provide a link with the finite discrete setting.

In a discrete setting, white noise is much less troublesome to define and is simply a sequence of i.i.d.\ Gaussian random variables.
To get a link to the continuous world, we define
a matching discretization of continuous white noise by the application of $\mathcal{N}$ to indicator functions on small intervals $((\ell-1)T_s, \ell T_s)$, where $T_s > 0$ is the sampling interval and $\ell \in \{-L+1, \dots, L\}$, i.e.,
\be
  \mathcal{N}_{L,T_s}[\ell] = \mathcal{N}(\ind_{((\ell-1)T_s, \ell T_s)})\,.
\ee
The resulting random variables $\mathcal{N}_{L,T_s}[\ell]$ are independent and identically distributed complex Gaussians with mean $0$ and variance $T_s$.
The discrete continuous wavelet transform of these noise samples is given by 
\ba 
  W^{(d)}_{\mwlet_{\alpha }}(\mathcal{N}_{L,T_s})(z)
  & = \sum_{\ell=-L+1}^L \mathcal{N}_{L,T_s}[\ell] \transl_x \dil_y \mwlet_{\alpha }(\ell T_s) 
  \notag \\
  & = \mathcal{N} \bigg( \sum_{\ell=-L+1}^L \transl_x \dil_y \mwlet_{\alpha }(\ell T_s)\ind_{((\ell-1)T_s, \ell T_s) } \bigg).
  \label{eq:wtdisc}
\ea
Our goal is to show that almost surely the random function $W^{(d)}_{\mwlet_{\alpha }}(\mathcal{N}_{L,T_s})$ converges for $L\to \infty$ to $W_{\mwlet_{\alpha }}(\mathcal{N})$ locally uniformly in $z$.
More specifically, we have the following result.
\begin{theorem}\label{th:approxdisc}
  For $T_s=L^{-\frac{\alpha}{\alpha+2}}$ and any compact $D\subseteq \Pi^+$
  \be
    \Pr\Big[ \lim_{L\to \infty} \sup_{z\in D} \, \bigabs{W^{(d)}_{\mwlet_{\alpha }}(\mathcal{N}_{L,T_s})(z) - W_{\mwlet_{\alpha }}(\mathcal{N})(z)} =0 \Big] =1 \,.
  \ee
\end{theorem}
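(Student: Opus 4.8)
The plan is to write both transforms as the Gaussian functional $\mathcal{N}$ applied to a single deterministic window, and to reduce the whole statement to an $L^2(\R)$-estimate for the approximation error of that window. Set $g_z:=\transl_x\dil_y\mwlet_{\alpha}$ and let
\be
  g_z^{(L)}:=\sum_{\ell=-L+1}^{L} g_z(\ell T_s)\,\ind_{((\ell-1)T_s,\ell T_s)}
\ee
be the truncated, piecewise-constant approximation of $g_z$ that appears in \eqref{eq:wtdisc}. By linearity of $\mathcal{N}$ together with \eqref{eq:wavelet_wn}, the quantity to be controlled is the single Gaussian variable
\be
  \Delta_L(z):=W^{(d)}_{\mwlet_{\alpha }}(\mathcal{N}_{L,T_s})(z)-W_{\mwlet_{\alpha }}(\mathcal{N})(z)=\mathcal{N}\bigl(g_z^{(L)}-g_z\bigr).
\ee
By the defining isometry $\var \mathcal{N}(s)=\norm{s}^2$ of the Gaussian Hilbert space $G$, the family $\{\Delta_L(z)\}_{z\in D}$ is a centered (complex) Gaussian field with $\E\bigabs{\Delta_L(z)}^2=\norm{g_z^{(L)}-g_z}_{L^2(\R)}^2$, so everything hinges on estimating this variance uniformly in $z\in D$ and then upgrading the resulting decay to almost sure uniform convergence.

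For the variance I would use the explicit Cauchy form $\mwlet_{\alpha}(t)=c\,(1-it)^{-(\alpha+1)/2}$ coming from \eqref{eq_ga}, which gives $\bigabs{g_z(t)}^2\le C_D\,y^{\alpha}(y^2+(t-x)^2)^{-(\alpha+1)/2}$ and an analogous bound for $g_z'$ with exponent $-(\alpha+3)/2$. Splitting $\norm{g_z^{(L)}-g_z}_2^2$ into the discretization error on the bulk and the truncation tail, one obtains, uniformly for $z$ in the compact set $D$ (on which $y$ is bounded above and below),
\be
  \int_{-LT_s}^{LT_s}\!\!\bigabs{g_z^{(L)}(t)-g_z(t)}^2 dt\le T_s^2\,\norm{g_z'}_{L^2(\R)}^2, \qquad \int_{\abs{t}\ge LT_s}\!\!\bigabs{g_z(t)}^2 dt\le C_D\,(LT_s)^{-\alpha},
\ee
the first by the mean value theorem and Cauchy--Schwarz on each subinterval (with $\norm{g_z'}_{L^2(\R)}^2\le C_D$), the second from the power-law decay of $g_z$. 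The choice $T_s=L^{-\alpha/(\alpha+2)}$ is exactly the one balancing $T_s^2$ against $(LT_s)^{-\alpha}$: both become $L^{-2\alpha/(\alpha+2)}$, whence
\be
  \sigma_L^2:=\sup_{z\in D}\E\bigabs{\Delta_L(z)}^2\le C_D\,L^{-2\alpha/(\alpha+2)}\xrightarrow{\;L\to\infty\;}0.
\ee

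To pass from this pointwise decay to a uniform, almost sure statement I would exploit analyticity: $z\mapsto y^{-\alpha/2}\Delta_L(z)$ is (anti-)analytic on $\Pi^+$ (each sampled wavelet, and $W_{\mwlet_{\alpha }}(\mathcal{N})$ by \eqref{eq:wavelet_wngaf}, is analytic up to the factor $y^{\alpha/2}$), so $\bigabs{y^{-\alpha/2}\Delta_L}$ is subharmonic. The sub-mean-value property then dominates $\sup_{z\in D}\bigabs{y^{-\alpha/2}\Delta_L(z)}^{2p}$ by the average of $\bigabs{y^{-\alpha/2}\Delta_L}^{2p}$ over a slightly larger compact $D'\subset\Pi^+$; taking expectations and using the Gaussian moment bound $\E\bigabs{\Delta_L(z)}^{2p}\le C_p\,\sigma_L^{2p}$ yields, for $M_L:=\sup_{z\in D}\bigabs{\Delta_L(z)}$, the estimate $\E M_L^{2p}\le C_{p,D}\,\sigma_L^{2p}$ for every $p$. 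Fixing $p$ so large that $2p\,\alpha/(\alpha+2)>1$ and applying Markov's inequality gives $\Pr[M_L>\varepsilon]\le \varepsilon^{-2p}\,\E M_L^{2p}\le C\,\varepsilon^{-2p}L^{-2p\alpha/(\alpha+2)}$, which is summable over $L\in\Nst$; Borel--Cantelli then forces $M_L\to0$ almost surely for every $\varepsilon>0$, hence $\lim_{L\to\infty}M_L=0$ a.s., which is the claim. (Equivalently, the Borell--TIS inequality applied to $M_L$, whose concentration scale is $\sigma_L$ and whose mean is $O(\sigma_L)$, produces the summable tail $\Pr[M_L>\varepsilon]\le C\exp(-c\varepsilon^2 L^{2\alpha/(\alpha+2)})$ directly.)

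The main obstacle is precisely this last upgrade. The balanced rate $\sigma_L^2\sim L^{-2\alpha/(\alpha+2)}$ is only polynomial and fails to be summable through a bare second-moment bound when $1<\alpha\le2$, so a naive Borel--Cantelli argument applied to $\E\bigabs{\Delta_L(z)}^2$ does not close along the full sequence. What saves the argument is the two-fold use of analyticity: first to convert the supremum over $D$ into an $L^2$-average governed by the pointwise variance, and then to bootstrap to arbitrarily high moments (equivalently, to invoke Gaussian concentration), which is what makes $L\in\Nst$ converge almost surely across the entire admissible range $\alpha>1$.
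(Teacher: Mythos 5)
Your proposal is correct, and while it shares the paper's overall skeleton (write both transforms as $\mathcal{N}$ applied to a window, bound the variance $\norm{g_z^{(L)}-g_z}_{L^2(\R)}^2$ uniformly on $D$ by splitting into a discretization bulk and a truncation tail, then sum tail probabilities and invoke Borel--Cantelli), it handles the crucial step---controlling $\sup_{z\in D}\abs{\Delta_L(z)}$---by a genuinely different mechanism. The paper treats $\Delta_L$ as a generic continuous Gaussian field and applies the Adler--Taylor supremum bound, which requires the covering-number estimate $N(D,d_L,\varepsilon)\leq A^2/\varepsilon^2$, hence the Lipschitz bound $d_L(w,z)\leq C_2L^{3/(\alpha+2)}\abs{w-z}$ of Lemma~\ref{lem:bounddl} and the monotonicity Lemma~\ref{lem:psimonoton} to make the resulting tail summable. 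You instead exploit that $y^{-\alpha/2}\Delta_L$ is (anti-)analytic, so $\abs{y^{-\alpha/2}\Delta_L}^{2p}$ is subharmonic and the sup over $D$ is dominated by an area average over a slightly enlarged compact $D'$; combined with the Gaussian moment equivalence $\E\abs{\Delta_L(w)}^{2p}\leq C_p\,\sigma_L^{2p}$ (with the variance bound run on $D'$), this gives $\E M_L^{2p}\leq C_{p,D}\,\sigma_L^{2p}$ for every $p$, and Markov with $2p\alpha/(\alpha+2)>1$ yields summability. This buys you a shorter, more self-contained argument that dispenses with metric entropy entirely, at the price of using analyticity (the paper's chaining route would survive for non-analytic fields). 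Two further remarks: your variance estimate is actually sharper than the paper's Lemma~\ref{lem:variance}---Cauchy--Schwarz on each subinterval gives the bulk term $T_s^2\norm{g_z'}_2^2$ rather than the Lipschitz-based $LT_s^3$, and the tail $\int_{\abs{t}\geq LT_s}\abs{g_z}^2\,dt$ of the $\abs{t}^{-(\alpha+1)}$-decaying density is indeed $O((LT_s)^{-\alpha})$, so you get $\sigma_L^2\lesssim L^{-2\alpha/(\alpha+2)}$ versus the paper's $L^{-2(\alpha-1)/(\alpha+2)}$ (both suffice, and both are balanced by the same $T_s$). Finally, your parenthetical Borell--TIS alternative is under-justified as stated, since the claim $\E M_L=O(\sigma_L)$ is itself the nontrivial point and would again require either the entropy bound or your subharmonicity argument; but since it is offered only as an aside, the main proof stands.
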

\begin{proof}
  We will show that for any $\varepsilon>0$
  \be \label{eq:prfixedlsum}
    \sum_{L\in \Nst} \Pr\Big[  \sup_{z\in D} \, \bigabs{W^{(d)}_{\mwlet_{\alpha }}(\mathcal{N}_{L,T_s})(z) - W_{\mwlet_{\alpha }}(\mathcal{N})(z)} >\varepsilon \Big] < \infty\,.
  \ee
  The theorem then follows from the Borel-Cantelli lemma.

  Exceedance probabilities such as the ones in \eqref{eq:prfixedlsum} are well-studied for a Gaussian random function $F$ with continuous realizations \cite{adler07} (of which our GAFs are a special case) and  depend on bounds on the maximal variance in $D$ (denoted as $\sigma_D^2 = \sup_{z \in D} \E[\abs{F(z)}^2]$) and a bound on a specific covering number of $D$.
  More specifically, we have to show that there exists $A^2> \sigma_D^2$ such that
  \be \label{eq:covnr}
    N(D,d_L,\varepsilon) \leq \frac{A^2}{\varepsilon^2}
  \ee
  for all $\varepsilon\leq \sigma_D$, where
  $N(D,d_L,\varepsilon)$ denotes the smallest number of disks of radius $\varepsilon$ with respect to the metric $d_L$ that is required to cover $D$.
  Here, $d_L$ is defined as
  \be
    d_L(w,z) = \big( \E \big[ \abs{F(w)- F(z) }^2 \big] \big)^{1/2}
  \ee
  Provided the bound \eqref{eq:covnr} holds, 
  \cite[Th.~4.1.2]{adler07} implies that there exists a universal constant $K$ such that
  \be\label{eq:boundoneprob}
    \Pr\Big[  \sup_{z\in D} \, \abs{F(z)} >u \Big] 
    \leq
    \bigg(\frac{K A u}{\sqrt{2}\sigma_D^2}\bigg)^2 \Psi\bigg(\frac{u}{\sigma_D}\bigg)
  \ee
  for all $u \geq (1+\sqrt{2}) \sigma_D$ where $\Psi(x) = \frac{1}{\sqrt{2\pi}}\int_x^{\infty} e^{-\frac{t^2}{2}} \mathrm{d}t$.
  
  We first note that almost every realization of $W_{\mwlet_{\alpha }}(\mathcal{N})$ is continuous because it is analytic with probability one. 
  Furthermore, $W^{(d)}_{\mwlet_{\alpha }}(\mathcal{N}_{L,T_s})$ has  continuous realizations as it is in fact just a random finite sum of the continuous functions $(x,y)\mapsto \mwlet_{\alpha }\big(\frac{\ell T_s -x}{y}\big)$.
  The difference $F_L(z):=W^{(d)}_{\mwlet_{\alpha }}(\mathcal{N}_{L,T_s})(z) - W_{\mwlet_{\alpha }}(\mathcal{N})(z)$ is now a Gaussian random  function in $z$ with   continuous realizations.

  Thus, it remains to show  \eqref{eq:covnr} and to calculate the maximal variance $\sigma_D^2$.
  The bound on the variance is presented in Lemma~\ref{lem:variance} in Appendix~\ref{app:lem} based on the definition of the variance of $\mathcal N(f)$ and the Lipschitz continuity of $\mwlet_{\alpha}$ and gives us
  \be
    \sigma_D^2 
    \leq
    C_1 L^{-2\frac{\alpha-1}{\alpha+2}}
    \label{eq:upperbsigd}
  \ee
  for some constant $C_1$ that depends on $D$ and $\alpha$ but not on $L$.
  
  To obtain the bound \eqref{eq:covnr} we first establish the bound  $d_L(w,z)\leq C_2 L^{ \frac{3}{ \alpha+2 }} \abs{w-z}$ for all $w,z \in D$ and some constant $C_2$ that that depends on $D$ and $\alpha$ but not on $L$, $w$, or $z$ in Lemma~\ref{lem:bounddl} in Appendix~\ref{app:lem}.
  Because $4 x_{\textrm{max}} y_{\textrm{max}}/\tilde{\varepsilon}^2$ Euclidean disks of radius $\tilde{\varepsilon}$ suffice to cover a rectangle $[-x_{\textrm{max}}, x_{\textrm{max}}] \times [0, y_{\textrm{max}}]$ that is sufficiently large to cover $D$, we need at most $4 C_2^2 L^{ \frac{6}{ \alpha+2 }} x_{\textrm{max}} y_{\textrm{max}}/\varepsilon^2$ Euclidean disks of radius $\varepsilon/(C_2L^{ \frac{3}{ \alpha+2 }})$ to cover $D$.
  Since by Lemma~\ref{lem:bounddl} $d_L(w,z)\leq C_2 L^{ \frac{3}{ \alpha+2 }} \abs{w-z}$ (i.e., disks of $d_L$ radius $\varepsilon$ are larger than disks of Euclidean radius $\varepsilon/(C_2L^{ \frac{3}{ \alpha+2 }})$)
  this implies that \eqref{eq:covnr} holds with $A^2= 4 C_2^2 L^{ \frac{6}{ \alpha+2 }}x_{\textrm{max}} y_{\textrm{max}}$, i.e., 
  \be
    N(D,d_L,\varepsilon) \leq \frac{4 C_2^2 L^{ \frac{6}{ \alpha+2 }}x_{\textrm{max}} y_{\textrm{max}}}{\varepsilon^2}\,.
  \ee
  
  We finally have all the ingredients to use the bound \eqref{eq:boundoneprob}.
  For $\sigma_D \leq \frac{1}{3+2 \sqrt{2}}$, we choose $u = \sigma_D^{\frac{1}{2}}$ and obtain
  \be\label{eq:boundoneprob2}
    \Pr\Big[  \sup_{z\in D} \, \bigabs{W^{(d)}_{\mwlet_{\alpha }}(\mathcal{N}_{L,T_s})(z) - W_{\mwlet_{\alpha }}(\mathcal{N})(z)} >\sigma_D^{\frac{1}{2}} \Big] 
    \leq
    \frac{C_3 L^{ \frac{6}{ \alpha+2 }}}{ \sigma_D^3} \Psi\big( \sigma_D^{-\frac{1}{2}}\big)\,.
  \ee
  For $\sigma_D$ sufficiently small we further have that  $\sigma_D^{-3} \Psi\big( \sigma_D^{-\frac{1}{2}}\big)$ is monotonically increasing in $\sigma_D$ by Lemma~\ref{lem:psimonoton} in Appendix~\ref{app:lem} and thus we can upper bound the right-hand side of \eqref{eq:boundoneprob2} by replacing $\sigma_D$ with its upper bound \eqref{eq:upperbsigd}, i.e.,
  \ba
    & \Pr\Big[  \sup_{z\in D} \, \bigabs{W^{(d)}_{\mwlet_{\alpha }}(\mathcal{N}_{L,T_s})(z) - W_{\mwlet_{\alpha }}(\mathcal{N})(z)} >\sigma_D^{\frac{1}{2}} \Big] 
    \notag \\
    & \leq
    \frac{C_3 L^{ \frac{6}{ \alpha+2 }}}{C_1^{\frac{3}{2}}L^{-3\frac{\alpha-1}{\alpha+2}}} \Psi\big( C_1^{-\frac{1}{4}}  L^{ \frac{\alpha-1}{2\alpha+4}}\big)
    \notag \\
    & =
    C_1^{-\frac{3}{2}}C_3 L^{ \frac{3\alpha +3 }{ \alpha+2 }} \Psi\big( C_1^{-\frac{1}{4}} L^{ \frac{\alpha-1}{2\alpha+4}}\big)
    \notag \\
    & \leq
    C_4 L^{-2} 
    \label{eq:boundprobfinal}
  \ea
  where the final inequality follows because for $L$ sufficiently large we have that $L^2 L^{ \frac{3\alpha +3 }{ \alpha+2 }} \Psi\big( C_2 L^{ \frac{\alpha-1}{2\alpha+4}}\big)$ is monotonically decreasing by Lem\-ma~\ref{lem:psimonoton} in Appendix~\ref{app:lem}.
  As for $L$ sufficiently large $\sigma_D$ becomes arbitrarily small, we have in particular $\sigma_D^{\frac{1}{2}}<\varepsilon$ for all sufficiently large $L$.
  Thus, \eqref{eq:boundprobfinal} implies \eqref{eq:prfixedlsum} which concludes the proof.
\end{proof}

Theorem~\ref{th:approxdisc} motivates the use of the continuous theory also in a discrete setting. 
In the experiments in Section~\ref{sec:experiments} below,  we will also see that the statistics conform well to what is predicted by the continuous theory.

A second problem in the discrete setting is that the zeros of the scalogram are not on the sampling grid with probability one. 
To resolve this issue, we use Lemma~\ref{lem:locminzero} below stating that every local minimum of the scalogram is a zero. 
Thus, instead of finding zeros, we can look for local minima, which are robust under discretization.
\begin{lemma}\label{lem:locminzero}
  Let $W_{\mwlet_{\alpha}} s(x,y)$ be given by \eqref{eq:defwltransform} and \eqref{eq_ga}. 
  Every local minimum of $\abs{W_{\mwlet_{\alpha}} s}$ is a zero.
\end{lemma}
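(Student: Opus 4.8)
The plan is to combine the analyticity established in Section~\ref{sec:AIWT} with a strict minimum principle for the logarithm of the scalogram. Recall that $F(z) := y^{-\frac{\alpha}{2}} W_{\mwlet_{\alpha}} s(x,y)$ is analytic on $\Pi^+$, so that
\[
  \abs{W_{\mwlet_{\alpha}} s(x,y)} = y^{\frac{\alpha}{2}}\,\abs{F(x+iy)}.
\]
Since $y^{\frac{\alpha}{2}}$ is strictly positive on $\Pi^+$, the zeros of $\abs{W_{\mwlet_{\alpha}} s}$ coincide with those of $F$, and every such zero is trivially a local minimum of the nonnegative function $\abs{W_{\mwlet_{\alpha}} s}$. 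Hence it suffices to rule out local minima at points $z_0 = x_0 + i y_0$ where $F(z_0)\neq 0$, and I would argue by contradiction, assuming such a nonzero local minimum exists.

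Near such a point $F$ is nonvanishing, so a holomorphic branch of $\log F$ exists locally and $\log\abs{F} = \mathrm{Re}\,\log F$ is harmonic there. Writing $g := \abs{W_{\mwlet_{\alpha}} s}$, the function $\log g = \tfrac{\alpha}{2}\log y + \log\abs{F}$ is therefore smooth on a neighborhood of $z_0$, and a direct computation of its Laplacian gives
\[
  \Delta \log g = \tfrac{\alpha}{2}\,\Delta \log y + \Delta \log\abs{F} = -\frac{\alpha}{2y^2} < 0,
\]
using $\alpha>1$ and $y>0$. Thus $\log g$ is \emph{strictly} superharmonic wherever $F\neq 0$.

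Finally, since $\log$ is increasing, a local minimum of $g$ at $z_0$ is also a local minimum of $\log g$; at an interior local minimum of the smooth function $\log g$ its Hessian is positive semidefinite, whence $\Delta \log g(z_0) \geq 0$. This contradicts the strict inequality above, so no local minimum of $\abs{W_{\mwlet_{\alpha}} s}$ can occur at a point where $F\neq 0$, which proves the claim. The only real subtlety is that the classical minimum modulus principle does not apply directly to $\abs{W_{\mwlet_{\alpha}} s}$ because of the non-holomorphic weight $y^{\frac{\alpha}{2}}$; passing to $\log g$ turns this weight into the strictly superharmonic term $\tfrac{\alpha}{2}\log y$, which in fact \emph{strengthens} the conclusion to a strict Laplacian inequality and makes the Hessian argument immediate. (The degenerate case $s=0$ is trivial, since then $W_{\mwlet_{\alpha}} s\equiv 0$ and every point is a zero.)
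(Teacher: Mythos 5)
Your proof is correct and follows essentially the same route as the paper: both decompose $\log\abs{W_{\mwlet_{\alpha}}s} = \tfrac{\alpha}{2}\log y + \log\abs{f}$ and exploit the strict superharmonicity coming from $\Delta\log y = -y^{-2}$. The only cosmetic difference is that you close the argument with a direct Hessian/Laplacian computation at the putative nonzero minimum, whereas the paper invokes the minimum principle for superharmonic functions; the substance is identical.
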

\begin{proof}
The proof is inspired by the STFT case (cf.~\cite[Sec.~8.2.2]{hough2009zeros}) but  circumvents the use of any gradient curves and does not rely on the factorization of analytic functions: 
Recall that the wavelet transform for Cauchy wavelets results in a weighted analytic function on $\Pi^+$, i.e.,  $W_{\mwlet_{\alpha}} s(x,y) = y^{\frac{\alpha}{2}} f(x+iy)$ where $f$ is analytic.
Taking the logarithm of the modulus, we have that 
\be
  \log \big(y^{\frac{\alpha}{2}} \abs{f(z)} \big)
  = \frac{\alpha}{2}\log y - \log \frac{1}{\abs{f(z)}}
\ee
where $1/f(z)$ is analytic on $\Pi^+$ except at the zeros of $f$ and in turn $\log \frac{1}{\abs{f(z)}}$ is subharmonic. 
Taking $\Delta\log y = -y^{-2}$, where $\Delta$ denotes the Laplace operator, we see that $\log y$ is superharmonic and thus $\frac{\alpha}{2}\log y - \log \frac{1}{\abs{f(z)}}$ is superharmonic and cannot have any local minima by the minimum principle on $\Pi^+$, except at the zeros of $f$.
\end{proof}

\section{The Zero Set of a Hyperbolic Gaussian Analytic Function}\label{sec:zerogaf}

The zero set of GAFs
is a well-studied point process.
We
can think of a point process as a random integer-valued measure, by setting a Dirac mass
at each zero. 
This point process is \emph{simple} \cite[Lem.~2.4.1]{hough2009zeros}, which means that
singletons have at most measure $1$. 
The \emph{first intensity function} of
the zero set $\mathsf{Z}= \{w \in \Pi^+:f_{\alpha}(w)=0\}$ of $f_{\alpha}$ is the function $\rho $ satisfying
\begin{equation*}
  \mathbb{E}[\card{\mathsf{Z} \cap U}]=\int_{U}\rho (z)\,\mathrm{d}z,
\end{equation*}
for every measurable subset $U\subseteq \Pi ^{+}$. 
The first intensity of
$\mathsf{Z}$ exists and can be computed from the
\emph{Edelman-Kostlan formula} \cite{sodin2000zeros,hough2009zeros, edelman1995many} as
\begin{equation}
\rho (z)=\frac{\alpha}{4\pi y^{2}}.
\label{eq:onepi}
\end{equation}
This means that the zeros  are distributed according to a multiple of
the hyperbolic area density on the upper half-plane. 
Besides this rough
description, the zeros of GAFs are known to be quite
rigid: the events where the concentration of the zeros deviates
significantly from what is prescribed by the first intensity are very
unlikely (see, e.g., the large deviations estimates in \cite{sodin2000zeros} and \cite{offord1967distribution}, the variance calculation in \cite{bu15}, and the hole probabilities in \cite{HyperbolicHole}).
Moreover, the interaction between zeros depends only on their \mbox{(pseudo-)}hyperbolic distance. 
In particular, the pair correlation function \cite[Sec.~4.3]{ilpest08} is given by \cite[eq.~(5.1.3)]{hough2009zeros}
\ba \label{eq:paircorr}
g(z,w) 
& = g(r)  \\
& = (1-s^{\alpha})^{-3} \Big( 1+(\alpha^2 - 2 \alpha -2)(s^{\alpha}+s^{2+2\alpha})
\notag \\*
& \quad 
+ (\alpha+1)^2 (s^{2\alpha}+s^{2+\alpha})
-2\alpha^2(s^{1+\alpha}+s^{1+2\alpha})+s^{2+3\alpha} \Big)
\notag
\ea
where $s=1-r^2$ and $r$ is the pseudo-hyperbolic distance between $z$ and $w$ given by
\be
  d_{\mathrm{ph}}(z,w)= \bigg\lvert \frac{z-w}{z-\bar{w}} \bigg \rvert\,.
\ee
Evaluating \eqref{eq:paircorr} numerically is surprisingly unstable as the positive and negative terms in the numerator turn out to be almost equal especially for values of $s$ close to one.
However, some basic algebraic manipulations show that the pair correlation function can be rewritten as 
\be \label{eq:paircorralt}
g(r)  
 = \frac{s^{\alpha} \big(\alpha (1-s)-s(1-s^{\alpha})\big)^2 + \big(\alpha s^{\alpha}(1-s)-(1-s^{\alpha})\big)^2}%
{(1-s^{\alpha})^{3}}\,.
\ee

Using the relation between the hyperbolic GAF and the wavelet transform given by~\eqref{eq:wavelet_wngaf}, the discretization of the wavelet transform of white noise presented in Section~\ref{sec:dwt}, and  Lemma~\ref{lem:locminzero} to identify the zeros in the discrete scalogram as local minima, enables us to simulate the zero pattern of a realization of a hyperbolic GAF. 
The snapshot of the zero set is presented in Figure~\ref{fig:zerohypgaf}.
\begin{figure}%
\includegraphics[width=1\textwidth,trim={1.5cm 7.3cm 1.5cm 7.1cm},clip]{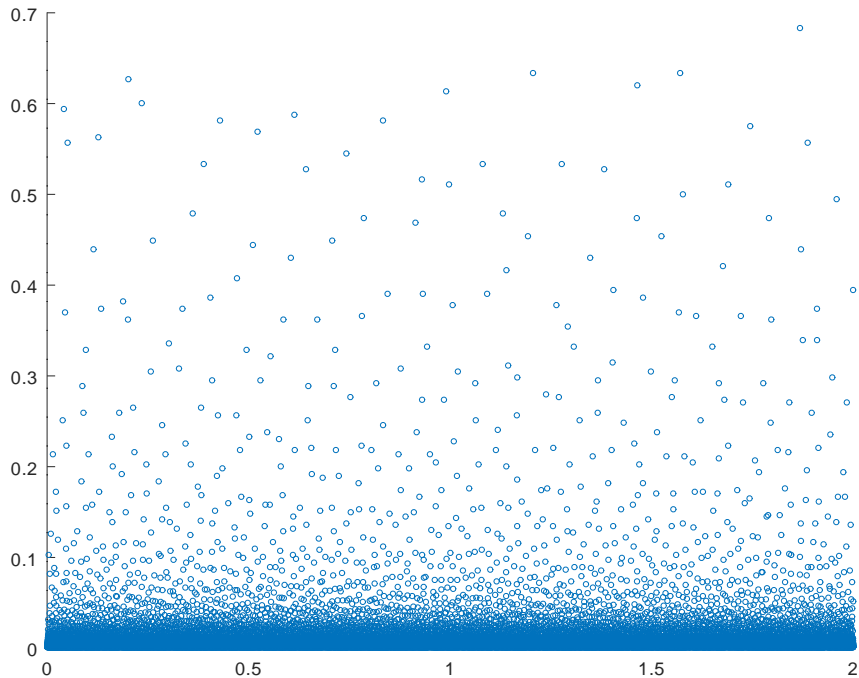}
\caption{Zero pattern of one realization of a hyperbolic GAF.}
\label{fig:zerohypgaf}
\end{figure}
That this simulation technique indeed results in a point pattern that follows the statistics predicted by \eqref{eq:onepi} and \eqref{eq:paircorr}, will be seen in our experiments in Section~\ref{sec:experiments}.

We propose two methods to find deviations from the zero pattern of white noise:
The first method is based on \eqref{eq:onepi} and tries to find areas where the local intensity of a given zero pattern deviates significantly from the first intensity function \eqref{eq:onepi} of pure noise. 
Here, we can quantify what ``significantly'' means based on the variance formula given in \cite{bu15}.

The second approach is based on the pair correlation function. 
We first propose a local estimator for the pair correlation function of a given zero pattern.
Furthermore, we derive in closed form the expectation of certain statistics of the point process that count point interactions in a certain region. 
In contrast to the pair correlation function these statistics are not approximate but exact and thus their deviation for a given  point pattern from the expected value is a more reliable method to identify signal components. 
In our experiments we will see that indeed the estimation of the pair correlation function is not reliable in particular for small sets whereas the proposed statistics are very stable for white noise signals.

\section{Estimation of the First Intensity Function}\label{sec:estfif}

For a given point $z\in \Pi^+$, we count the number of points in hyperbolic disks $D_\mathrm{ph}(z,r)$ of pseudo-hyperbolic radius $r$ around $z$. 
Due to the defining property of the first intensity function, the expected number of points in this disk for a pattern generated by white noise is given by
\be
  \mu_{r}
  = \mathbb{E}[\card{\mathsf{Z} \cap D_\mathrm{ph}(z,r)}]
  = \int_{D_\mathrm{ph}(z,r)}\frac{\alpha}{4\pi y^{2}}\,\mathrm{d}z
  = \frac{\alpha r^2}{1-r^2}
\ee
where we used that a disk of pseudo-hyperbolic radius $r$ has hyperbolic area $\frac{ r^2}{4\pi (1-r^2)}$ \cite[Sec.~2.3]{hough2009zeros}.
Furthermore, we can even calculate the variance of the random quantity $\card{\mathsf{Z} \cap D_\mathrm{ph}(z,r)}$ as \cite[Lem.~5]{bu15}
\ba
  \sigma^2_{r}
  & = \var[\card{\mathsf{Z} \cap D_\mathrm{ph}(z,r)}] \notag \\
  & =  \frac{\alpha^2 r^4}{2 \pi (1-r^2)^2}
        \int_{-\pi}^{\pi} \frac{2(1 - r^2)^{2\alpha}(1 - \cos t)}%
        {\big(\abs{1 -r^2 e^{it}}^{2\alpha}-((1-r^2)^{2 \alpha})\big) \abs{1-r^2 e^{it}}^2 }
        \,\mathrm{d}t \,.
\ea
Without any further knowledge the best probability bound on deviations from the mean is given by Chebyshev's inequality
\be
  \Pr\bigg[\frac{\abs{\card{\mathsf{Z} \cap D_\mathrm{ph}(z,r)}-\mu_{r}}}{\sigma_{r}}\geq \delta \bigg]
  \leq \frac{1}{\delta^2}\,.
\ee
Although we expect this bound to be quite loose in our scenario, it allows us to give rigorous statements on the probability that the zero pattern was not generated by white noise.
A simple procedure to do so is to decide on a finite number of centers $z_k$, radii $r_k$, and deviations $\delta_k$ before the experiment is performed and count the number of zeros in $D_\mathrm{ph}\big( z_k,r_k \big)$.
The probability that any of these counts deviates by more than $\sigma_{r_k} \delta_k$ from the means $\mu_{r_k}$ can be trivially upper-bounded by
\be
  \Pr\bigg[\bigcup_{k}\bigg\{\frac{\abs{\card{\mathsf{Z} \cap D_\mathrm{ph}(z_k,r_k)}-\mu_{r_k}}}{\sigma_{r_k}}\geq \delta_k \bigg\} \bigg]
  \leq \sum_k \frac{1}{\delta_k^2}\,.
\ee
Note that we either have to use quite large $\delta$ or only few $k$ to obtain a useful bound. 

Similarly, we can use the deviation of local estimates of the first intensity function from the expected mean $\mu_r$ normalized by the standard deviation $\sigma_r$ to create a mask that allows us to identify areas where we expect signal components.
Combining the masks for various radii $r>0$ enables a smoother filtering procedure.
More specifically, for a given zero pattern $Z$, we calculate for $w\in \Pi^+$
\be\label{eq:maskint}
  q(w) = \min\bigg\{
  \max \bigg\{
  a
  \bigg(\sum_{k=1}^K\frac{ \big( \card{Z \cap D_\mathrm{ph}(w,r_k)}-\mu_{r_k}\big)^2}{K\sigma_{r_k}^2}
  \bigg) 
  -b
  , 0 \bigg\}
  , 1
  \bigg\}
\ee
defining a filtering mask for every point on $\Pi^+$.
Here, the factor $a>0$ and threshold $b\geq 0$ are design parameters that can be chosen based on the application. 
 In our experiments in Section~\ref{sec:experiments}, we choose $a=1$ and $b=4.65$. 
This value of $b$ was obtained in simulations as the $0.999$ quantile of the calculated statistic for white noise and thus results in almost complete elimination of noise. Only the most prominent signal components are preserved.
For softer filtering the threshold can also be set to lower values of $b$ or even  $b=0$.

\section{Estimation of the Pair Correlation Function}\label{sec:estpcf}

The second moment properties of a point pattern are most conveniently described by the pair correlation function \cite[Sec.~4.3]{ilpest08}.
For the zero set of white noise, there is a closed form expression of the pair correlation function available and thus we can try to locally estimate the pair correlation function of a given point pattern to identify regions of deviation from white noise.
There is abundant literature on the estimation of the pair correlation function for a given point pattern, but often they are designed under the assumption of a stationary point process generating the pattern.
Furthermore, the methods we are aware of assume that a point pattern is observed in some observation window $W$ and use all points observed in $W$ to estimate the pair correlation function. 
In particular, these methods assume that one does not have any knowledge of the point pattern outside of $W$ and usually have to deal with boundary effects.
We, however, want to estimate the pair correlation function locally and thus restrict ourselves to a small window $W$ but still have access to the points outside of $W$.
Thus, we do not have to deal with boundary effects.
This considerably simplifies the estimation procedure.

We thus deviate from classical estimators in that we have to deal with the hyperbolic background measure, i.e., the point process is not stationary, and that we want to estimate the pair correlation function in a given window $W$ but also have access to points outside of $W$.
As suggested in \cite{ilpest08}, we will estimate the second factorial moment density $\rho^{(2)}(z,w)$ and the first intensity $\rho(z)$ to obtain an estimate of the pair correlation function $g(z,w)= \rho^{(2)}(z,w)/(\rho(z)\rho(w))$.

\subsection{Second Factorial Moment Density}
The definition of the second factorial moment density $\rho^{(2)}$ of a point process $\mathsf{Z}$ guarantees that
\be
  \E\bigg[\sum_{(z\neq w)\in \mathsf{Z}} f(z,w)\bigg] = \int\int f(z,w) \rho^{(2)}(z,w)\,\mathrm{d}z\mathrm{d}w
\ee
for any measurable $f\colon \C^2 \to \R^+$.
We assume that the pair correlation function depends only on the hyperbolic distance between $z$ and $w$, i.e., $g(z,w)=g(d_{\mathrm{ph}}(z,w))$, whereas the first intensity function is constant  with respect to hyperbolic area, i.e., $\rho(z) = \tilde{\rho}/y^2$. 
Thus, we can rewrite $\rho^{(2)}(z,w) = \frac{\tilde{\rho}^{(2)}(d_{\mathrm{ph}}(z,w))}{y^2 v^2}$.
To obtain an estimate of $\tilde{\rho}^{(2)}(r_0)$ for $r_0\in (0,1)$, we choose 
$
f(z,w) = \ind_{W}(w) \ind_{D_\mathrm{ph}(w,r_0+h)\setminus D_\mathrm{ph}(w,r_0-h)}(z)$ for a width $h\in (0,r_0]
$.
Thus, we obtain
\be \label{eq:calcpcf}
  \E\bigg[\sum_{\substack{w\in \mathsf{Z}\cap W \\ z\in \mathsf{Z}:\abs{d_{\mathrm{ph}}(z,w)-r_0}<h}} 1\bigg] 
  = \int_{W}\int_{\{z:\abs{d_{\mathrm{ph}}(z,w)-r_0}<h\}} \rho^{(2)}(z,w)\,\mathrm{d}z\mathrm{d}w.
\ee
We illustrate the set $\{z:\abs{d_{\mathrm{ph}}(z,w)-r_0}<h\}$ for a given $w$ in Fig.~\ref{fig:setw}.
\begin{figure}%
\begin{tikzpicture}
	\filldraw[fill=blue!40!white, draw=black] (0,0) circle (1.5cm);
  \filldraw[fill=white, draw=black] (0,0) circle (1cm);
  \draw[black] (0,0) circle (1.25cm);
  \draw[thick,<->] (1,0) -- (1.5,0);
  \draw (1.25,0) node[anchor=south] {$2h$};
  \draw[thick,->] (0,0) -- (0,1.25);
  \draw (0,0.65) node[anchor=east] {$r_0$};
  \filldraw[black] (0,0) circle (0.06cm) node[anchor=north] {$w$};
\end{tikzpicture}
\caption{Illustration of the set $\{z:\abs{d_{\mathrm{ph}}(z,w)-r_0}<h\}$. For simplicity we illustrate the set for the Euclidean distance in place of the pseudo-hyperbolic distance.
 For a given $w$, the function $f(z,w)$ is equal to one if and only if the point $z$ belongs to the blue-shaded ring.}%
\label{fig:setw}%
\end{figure}
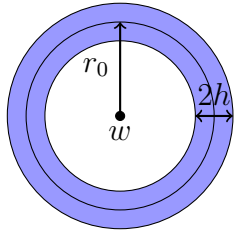
We change the coordinates $(x,y)$ of $z$ to a kind of pseudo-hyperbolic polar coordinates $(r,\phi)$ centered at $w$ by the change of variables 
\be
\begin{pmatrix}
	x \\
  y
\end{pmatrix}
= 
\begin{pmatrix}
	u+\frac{2rv}{1-r^2} \sin \phi \\
  \frac{1+r^2}{1-r^2}v + \frac{2rv}{1-r^2} \cos \phi
\end{pmatrix}\,.
\ee
The corresponding Jacobian determinant is given by $\frac{4rv^2(1+r^2+2r\cos \phi)}{(1-r^2)^3}$.
Using this transformation in \eqref{eq:calcpcf} results in 
\ba
& \E\bigg[\sum_{\substack{w\in \mathsf{Z}\cap W \\ z\in \mathsf{Z}:d_{\mathrm{ph}}(z,w)\in (r_0-h, r_0+h)}} 1\bigg] 
\notag \\*
& = \int_W \int_{r_0-h}^{r_0+h} \int_0^{2\pi} \tilde{\rho}^{(2)}(r) \frac{4rv^2(1+r^2+2r\cos \phi)}{(1-r^2)^3 (\frac{1+r^2}{1-r^2}v + \frac{2rv}{1-r^2} \cos \phi)^2} \mathrm{d}\phi \mathrm{d}r \frac{1}{ v^2}\,\mathrm{d}w
\notag \\
& = 4 \int_{r_0-h}^{r_0+h} \frac{r\,\tilde{\rho}^{(2)}(r) }{1-r^2}\int_0^{2\pi} \frac{1}{1+r^2+2r\cos \phi}   \,\mathrm{d}\phi \mathrm{d}r \int_W \frac{1}{v^2}\,\mathrm{d}w
\notag \\
& = 4 \int_{r_0-h}^{r_0+h} \frac{r\,\tilde{\rho}^{(2)}(r) }{1-r^2}\frac{2\pi }{1-r^2} \mathrm{d}r \int_W \frac{1}{v^2}\,\mathrm{d}w
\notag \\
& = 8\pi \int_{r_0-h}^{r_0+h} \frac{r\,\tilde{\rho}^{(2)}(r)}{(1-r^2)^2} \mathrm{d}r \int_W \frac{1}{v^2}\,\mathrm{d}w
\notag \\
& \approx 16 \pi\frac{hr_0\,\tilde{\rho}^{(2)}(r_0) }{(1-r_0^2)^2} \int_W \frac{1}{v^2}\,\mathrm{d}w\,.
\label{eq:estmyf}
\ea
Choosing $W$ as a disk $D_\mathrm{ph}(w_1,r_1)$ around an estimation center $w_1$ and  of pseudo-hyperbolic radius $r_1\in (0,1)$, the integral in this approximation is  $\frac{4\pi r_1^2}{1-r_1^2}$ and we obtain 
\be
 \E\bigg[\sum_{ w\in \mathsf{Z}\cap D_\mathrm{ph}(w_1,r_1)  }  \card{\{z\in \mathsf{Z}:\abs{d_{\mathrm{ph}}(z,w)-r_0}<h\}}\bigg] 
 \approx   \frac{4^3 \pi^2 h  r_1^2 r_0\,\tilde{\rho}^{(2)}(r_0) }{(1-r_1^2)(1-r_0^2)^2} \,.
\notag
\ee
Thus, for a given zero pattern $Z$ we have the following estimator of $\rho^{(2)}$
\ba
  & \hat{\rho}_{r_0,r_1}^{(2)}(z_1,w_1)
  \label{eq:estrho2} \\
  & = \frac{(1-r_1^2)(1-r_0^2)^2}{4^3 \pi^2 h  r_1^2 r_0 v_1^2 y_1^2} \sum_{ w\in Z\cap D_\mathrm{ph}(w_1,r_1)  }  \card{\{z\in Z:\abs{d_{\mathrm{ph}}(z,w)-r_0}<h\}}.
  \notag 
\ea

\subsection{First Intensity Function}
To estimate the pair correlation function, we also need an estimator of the first intensity function at $z_1$ and $w_1$.
Similarly to $\tilde{\rho}^{(2)}$, we assume that the intensity is proportional to the hyperbolic background measure, i.e., we set $\rho(z) = \frac{\tilde{\rho}}{y^2}$ where $\tilde{\rho}$
is constant.
Based on the defining property of the first intensity, we have
\be
  \E[\card{\mathsf{Z} \cap D_\mathrm{ph}(w_1,r_1)}]=\int_{D_\mathrm{ph}(w_1,r_1)}\frac{\tilde{\rho}}{y^2}\,\mathrm{d}z = \frac{\tilde{\rho} 4 \pi r_1^2}{1-r_1^2},
\ee
and, in turn, the estimator
\be\label{eq:estrho}
  \hat{\rho}_{r_1}(w_1) = \card{Z \cap D_\mathrm{ph}(w_1,r_1)} \frac{1-r_1^2}{ 4 \pi r_1^2 v_1^2}
\ee
for $\rho$.
To finally obtain an estimator that does only depend on the localization set $D_\mathrm{ph}(w_1,r_1)$ and the pseudo-hyperbolic radius $r_0$, we do not estimate $\rho(z_1)$ by \eqref{eq:estrho} but use the white noise first intensity 
$\rho (z_1)=\frac{\alpha}{4\pi y_1^{2}}$ from \eqref{eq:onepi} instead.

\subsection{Pair Correlation Function}
Combining \eqref{eq:estrho2}, \eqref{eq:estrho}, and \eqref{eq:onepi}, we obtain the following local estimator of the  pair correlation function
\ba
  \hat{g}_{w_1,r_1}(r_0) 
  & = \frac{\hat{\rho}_{r_0,r_1,h}^{(2)}(z_1,w_1)}{\rho(z_1)\hat{\rho}_{r_1}(w_1)} \notag \\
  & = \frac{  (1-r_0^2)^2}{4 \alpha h   r_0  } \frac{\sum_{ w\in Z\cap D_\mathrm{ph}(w_1,r_1)  }  \card{\{z\in Z:\abs{d_{\mathrm{ph}}(z,w)-r_0}<h\}}}{\card{Z \cap D_\mathrm{ph}(w_1,r_1)}}
  \label{eq:paircorrest} 
\ea 
which does no longer depend on the second point $z_1$ but is only a function of the localization set $W = D_\mathrm{ph}(w_1,r_1)$ and the radius $r_0$.
Note again that this estimator does not only depend on the zeros in $W$ but potentially on all zeros in the larger circle $D_\mathrm{ph}(w_1,r_1+r_0)$.

Since we know the pair correlation function for white noise, we can use a filtering procedure similar to Section~\ref{sec:estfif} to identify time-scale components that most likely do not result from  white noise.
Similarly to \eqref{eq:maskint}, we define a mask by 
\ba\label{eq:maskpc}
  & q^{(2)}(w_1) \notag \\*
  & = \min\bigg\{
  \max \bigg\{
  \bigg(\sum_{k=1}^{K_0}\sum_{\ell=1}^{K_1}\frac{a_{k,\ell} \big(g(r_{0,k}) - \hat{g}_{w_1,r_{1,\ell},h}(r_{0,k})\big)^2}{K_0 K_1}
  \bigg) 
  -b
  , 0 \bigg\}
  , 1
  \bigg\}
\ea
where $g$ was defined in \eqref{eq:paircorr} and the local estimator $\hat{g}$ in \eqref{eq:paircorrest}.
The factor $a_{k,\ell} $ here depends on the indices $k,\ell$ to enable a correction for the variance of the estimated pair correlation. 
In our experiments in Section~\ref{sec:experiments}, we estimate this variance based on simulations using white noise.

\subsection{Exact Statistics of Point Interactions}\label{sec:statpa}

Recall that in our approximation of the estimator of the second factorial 
moment measure above, we used in \eqref{eq:estmyf} an approximation that is based on the assumption that $\frac{r\,\tilde{\rho}^{(2)}(r)}{(1-r^2)^2}$ is almost affine for small variations of $r$.
However, since we know the pair correlation function and the first intensity function for a zero pattern generated by white noise, we can alternatively calculate the expected value in \eqref{eq:estmyf} exactly.
More specifically, we have
\ba
\E\bigg[\sum_{\substack{w\in \mathsf{Z}\cap D_\mathrm{ph}(w_1,r_1)  \\ z\in \mathsf{Z}:d_{\mathrm{ph}}(z,w)\in (a, b)}} 1\bigg] 
& = 8\pi \int_{a}^{b} \frac{r\,\tilde{\rho}^{(2)}(r)}{(1-r^2)^2} \mathrm{d}r \int_{ D_\mathrm{ph}(w_1,r_1)} \frac{1}{v^2}\,\mathrm{d}w
\notag \\
& = \frac{2 r_1^2\alpha^2}{1-r_1^2} \int_{a}^{b} \frac{r\,g(r) }{(1-r^2)^2} \mathrm{d}r  \,.
\notag 
\ea
Here, the integral can be calculated by substituting $1-r^2=s$ and inserting \eqref{eq:paircorralt}
\ba
& \frac{2 r_1^2\alpha^2}{1-r_1^2} \int_{a}^{b} \frac{r\,g(r) }{(1-r^2)^2} \mathrm{d}r
\notag \\*
& =  \frac{r_1^2\alpha^2}{1-r_1^2} \bigg[ \frac{ (\alpha+1) s^{\alpha} (1-s)^2-(1-s^{\alpha+1})^2
 }{s(1-s^{\alpha})^{2}} \bigg]^{1-a^2}_{s= 1-b^2}  \,.
\notag
\ea
For $s  \nearrow 1$, the expression in brackets converges to ${-}\frac{\alpha+ 1}{\alpha}$ and for other values of $s\in (0,1)$ it can simply be evaluated.
Proceeding as above, we see that the estimator in \eqref{eq:paircorrest} should actually not be compared to the pair correlation function $g$ but the modified term
\ba
   \tilde{g}(r_0, h) 
  & =  \frac{  (1-r_0^2)^2}{4  h   r_0  }  
  \Big( 
  \tfrac{ (\alpha+1) (1- (r_0-h)^2)^{\alpha} (r_0-h)^4-(1-(1- (r_0-h)^2)^{\alpha+1})^2
 }{(1- (r_0-h)^2)(1-(1- (r_0-h)^2)^{\alpha})^{2}} 
\notag \\
& \quad -
  \tfrac{ (\alpha+1) (1- (r_0+h)^2)^{\alpha} (r_0+h)^4-(1-(1- (r_0+h)^2)^{\alpha+1})^2
 }{(1- (r_0+h)^2)(1-(1- (r_0+h)^2)^{\alpha})^{2}}
  \Big)
  \notag
\ea 
provided $r_0-h>0$, and 
\ba
\tilde{g}(r_0, h)
& = \frac{  (1-r_0^2)^2}{4  h   r_0  }
\Big(
{-}\tfrac{\alpha+ 1}{\alpha} 
-
  \tfrac{ (\alpha+1) (1- (r_0+h)^2)^{\alpha} (r_0+h)^4-(1-(1- (r_0+h)^2)^{\alpha+1})^2
 }{(1- (r_0+h)^2)(1-(1- (r_0+h)^2)^{\alpha})^{2}}
  \Big)  
\notag
\ea
for the special case $r_0-h=0$.
As we will see in the simulations below, these $\tilde{g}$ are significantly closer to the estimates $\hat{g}$ for simulated white noise than the true pair correlation function $g$ in particular for small values of $r_0$.
Thus, $\tilde{g}$ should be used in the filtering procedure described in \eqref{eq:maskpc} in place of $g$.

\section{Experiments}\label{sec:experiments}

We first illustrate the accuracy of the proposed estimators in the discrete white noise setting.%
\footnote{
The scripts we used for conducting our experiments is provided on \url{https://github.com/gkoliander/WaveletPPP}.}
To this end, we generate $L = 88200$ independent standard normal samples that we interpret as samples of a $2$s long signal.
We apply the discrete continuous WT with mother wavelet $\mwlet_{\alpha}$ and $\alpha = 300$ and interpret the unique peak of $\mwlet_{\alpha}$ in the frequency domain as the frequency associated with a given scale (cf.~\cite[Sec.~IV]{hokopr19}).
To identify set of zeros $Z$ in the discretized scalogram, we use the fact that zeros and local minima coincide (see Lemma~\ref{lem:locminzero}) and find time-scale points where the modulus of the discrete continuous WT is less than at all 4 neighboring points (using 8 neighbors did not improve accuracy and resulted only in higher complexity).

We count the number of points $\card{Z \cap D_\mathrm{ph}(w_1,r_1)}$ for all discrete time-scale points $w_1$ and various radii $r_1$, and calculate $\hat{g}_{w_1,r_1}(r_0)$ in \eqref{eq:paircorrest} for various radii $r_0$ and $r_1$ and all discrete time-scale points where the denominator in \eqref{eq:paircorrest} is nonzero.
To discard the most significant boundary effects, we further restrict to time-scale points $w_1$ that have a minimum pseudo hyperbolic distance of the maximal value of $r_0+r_1$ to the boundary of our observation window.
The sample mean $\hat{\mu}_{r_1}$ and sample variance $\hat{\sigma}_{r_1}^2$ of $\card{Z \cap D_\mathrm{ph}(w_1,r_1)}$ for the remaining discrete time-scale points $w_1$ are given in Table~\ref{tab:firstint} in comparison to their proposed expectations.
 \begin{table}[tb]
\caption{Sample mean $\hat{\mu}_{r_1}$ and sample variance $\hat{\sigma}_{r_1}^2$ of the number of zeros in a disk of various radii $r_1$ in comparison to the exact mean and variance}\label{tab:firstint}
 \renewcommand{\arraystretch}{1.2}
 \centering
\begin{tabular}{|c|cccc|}
  \hline 
     $r_1$ & $\mu_{r_1}$ & $\hat{\mu}_{r_1}$ & $\sigma_{r_1}^2$ & $\hat{\sigma}_{r_1}^2$ \\
   \hline 
     $0.0768 $ & $1.781$ 		& $1.788$ 	& $0.531$ 		& $0.543$ \\
   \hline                                                                       	                	
     $0.1024 $ & $3.181$ 		& $3.183$ 	& $0.684$ 		& $0.697$ \\
   \hline 
     $0.1280 $ & $5$ 				& $4.985$ 	& $0.849$ 		& $0.859$ \\         
   \hline
     $0.1536 $ & $7.253$ 	& $7.244$ & $1.019$ 		& $1.033$ \\
   \hline
     $0.1793 $ & $9.959$ 	& $9.957$ & $ 1.194$ 		& $1.206$  \\
   \hline 
  \end{tabular}
 \renewcommand{\arraystretch}{1}
\end{table}
Similarly, we present the averaged values of $\hat{g}_{w_1,r_1,h}(r_0)$ in comparison to the true pair correlation function $g(r_0)$ and the corrected function  $\tilde{g}(r_0, h)$ in Table~\ref{tab:paircorr}.
 \begin{table}[thb]
   \caption{Sample means of $\hat{g}_{w_1,r_1,h}(r_0)$  for one realization of white noise over all valid $w_1$ for fixed $h=0.0427$ and various radii $r_0$ and $r_1$ in comparison to  the true pair correlation function $g(r_0)$ and the corrected function  $\tilde{g}(r_0, h)$.}\label{tab:paircorr}
 \renewcommand{\arraystretch}{1.2}
 \begin{center}
 \resizebox{\textwidth}{!}{%
 \begin{tabular}{|c|cc|cccccc|}
  \hline 
    $r_0$ & $g(r_0)$ & $\tilde{g}(r_0, h)$ & \multicolumn{6}{c|}{Mean of $\hat{g}_{w_1,r_1,h}(r_0)$}  \\
   \hline 
     & & &   $r_1=$ & $0.0768 $ & $0.1024 $ &$0.1280 $ & $0.1536 $ & $0.1793 $ \\
   \hline
     $0.0427$ 		& $0.270$ 	& $0.489$ 	 & 	& $0.542$ & $0.504$ & $0.493$ & $0.492$ & $0.492$ \\
   \hline                                                 	                	
     $0.0854$ 		& $0.863$ 	& $0.861$ 	 & 	& $0.854$ & $0.854$ & $0.857$ & $0.858$ & $0.858$ \\
   \hline 
     $0.1280$ 		& $1.050$ 	& $1.022$ 	 & 	& $1.039$ & $1.026$ & $1.024$ & $1.024$ & $1.023$ \\         
   \hline 
  \end{tabular}
 \renewcommand{\arraystretch}{1}
}
\end{center}
\end{table}
We see that the problem described in Section~\ref{sec:statpa} is particularly prevalent for small radii and thus the correction will be used in the proceeding filtering schemes.
The sample variances of the estimator $\hat{g}_{w_1,r_1,h}(r_0)$ for various values of $r_0$ and $r_1$ are presented in Table~\ref{tab:paircorrvar}.
 \begin{table}[thb]
 \caption{Sample standard deviations of $\hat{g}_{w_1,r_1,h}(r_0)$ for one realization of white noise over all valid $w_1$ for fixed $h=0.0427$ and various radii $r_0$ and $r_1$.}\label{tab:paircorrvar}
\renewcommand{\arraystretch}{1.2}
 \begin{center}
 \begin{tabular}{|c|ccccc|}
  \hline 
    $r_0$ & $\hat{\sigma}_{r_{0},0.0768}$ & $\hat{\sigma}_{r_{0},0.1024}$ & $\hat{\sigma}_{r_{0},0.1280}$ & $\hat{\sigma}_{r_{0},0.1536}$ & $\hat{\sigma}_{r_{0},0.1793}$  \\
   \hline
     $0.0427$ 		& $0.208$ & $0.202$ & $0.176$ & $0.149$ & $0.127$ \\
   \hline                                                 	                	
     $0.0854$ 		& $0.188$ & $0.148$ & $0.121$ & $0.101$ & $0.085$\\
   \hline 
     $0.1280$ 		& $0.165$ & $0.121$ & $0.097$ & $0.083$ & $0.073$ \\         
   \hline 
  \end{tabular}
 \renewcommand{\arraystretch}{1}
\end{center}
\end{table}
These are used in the filtering procedure below to normalize the deviation from the true pair correlation function $g(r_0)$ and the corrected function  $\tilde{g}(r_0, h)$ dependent on the radii $r_0$ and $r_1$.

We perform some experiments of the proposed tests and filtering approaches to illustrate their strength and weaknesses.
The first example we use to illustrate our approach is the male English voice signal~50 in~\cite{sqam} from $0.4$s to $2.4$s. 
In Fig.~\ref{fig:voice}, we show the scalogram and masks based on first intensity estimation and the pair correlation estimation for $\alpha=300$.
\begin{figure}[tbp]%
\begin{minipage}[c]{0.45\textwidth}
\includegraphics[width=1\textwidth,trim={1.5cm 7.8cm 2.3cm 7.1cm},clip]{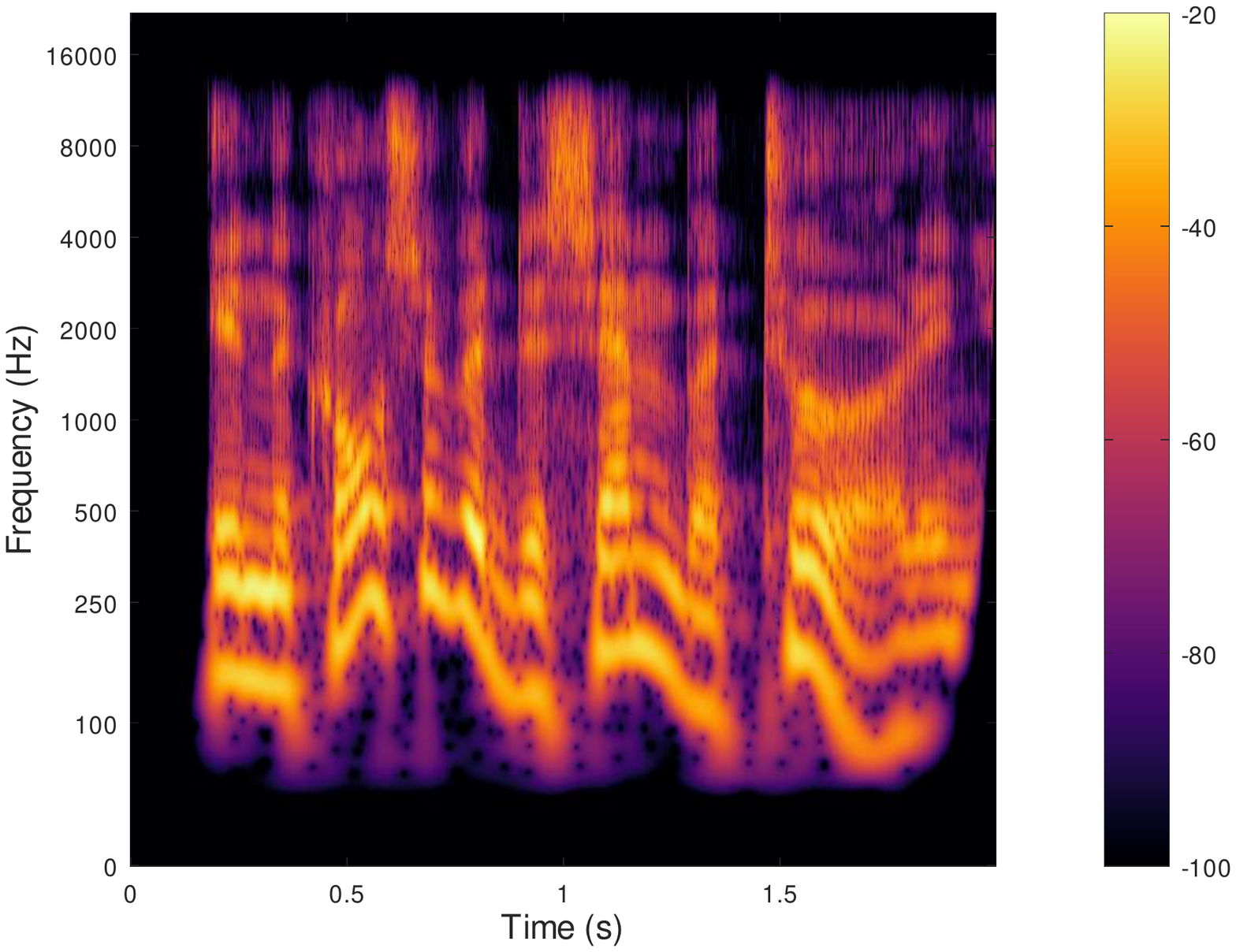}
\end{minipage}
\begin{minipage}[c]{0.45\textwidth}
\includegraphics[width=1\textwidth,trim={1.5cm 7.8cm 2.3cm 7.1cm},clip]{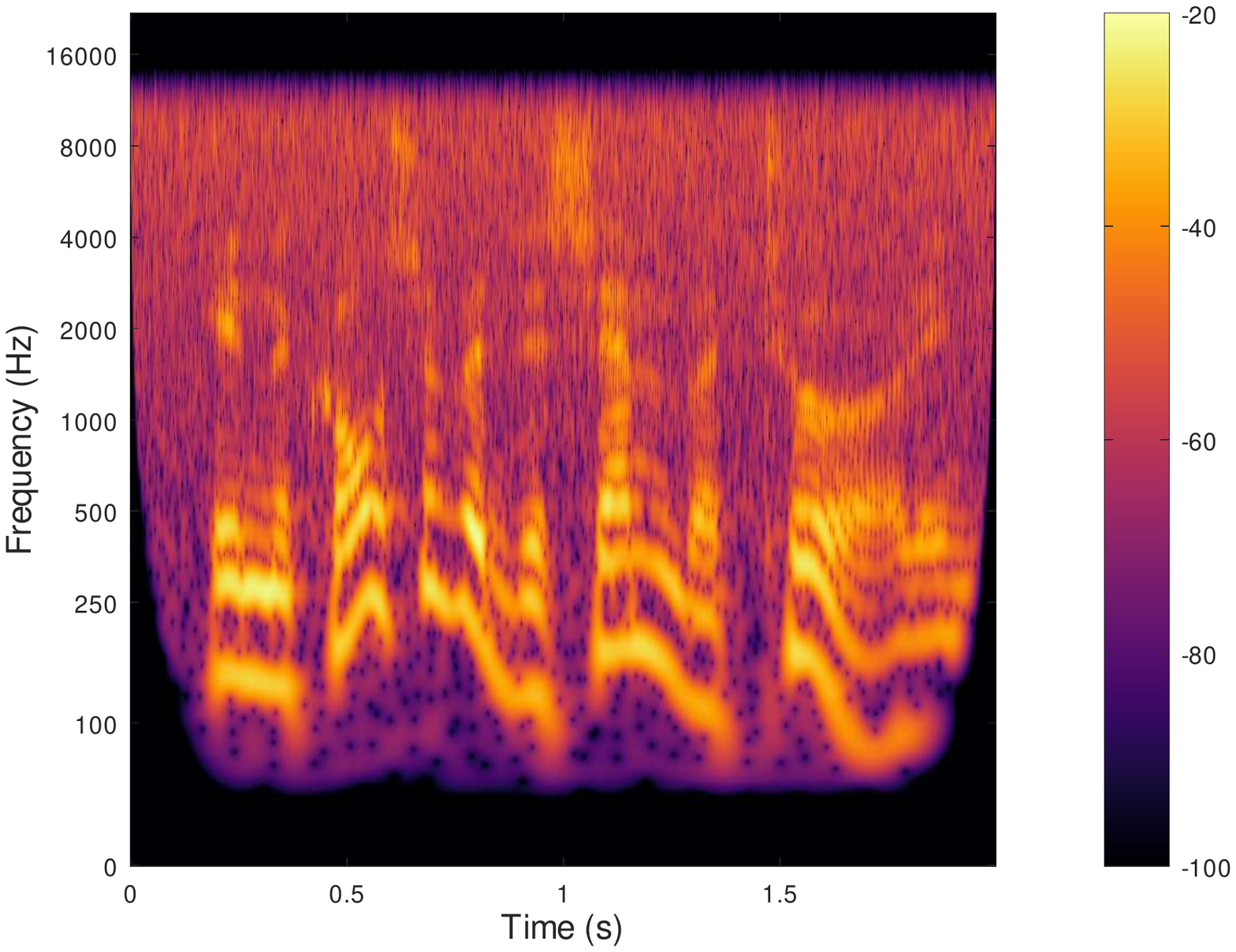}
\end{minipage}
\begin{minipage}[c]{0.45\textwidth}
\includegraphics[width=1\textwidth,trim={1.5cm 7.8cm 2.3cm 7.1cm},clip]{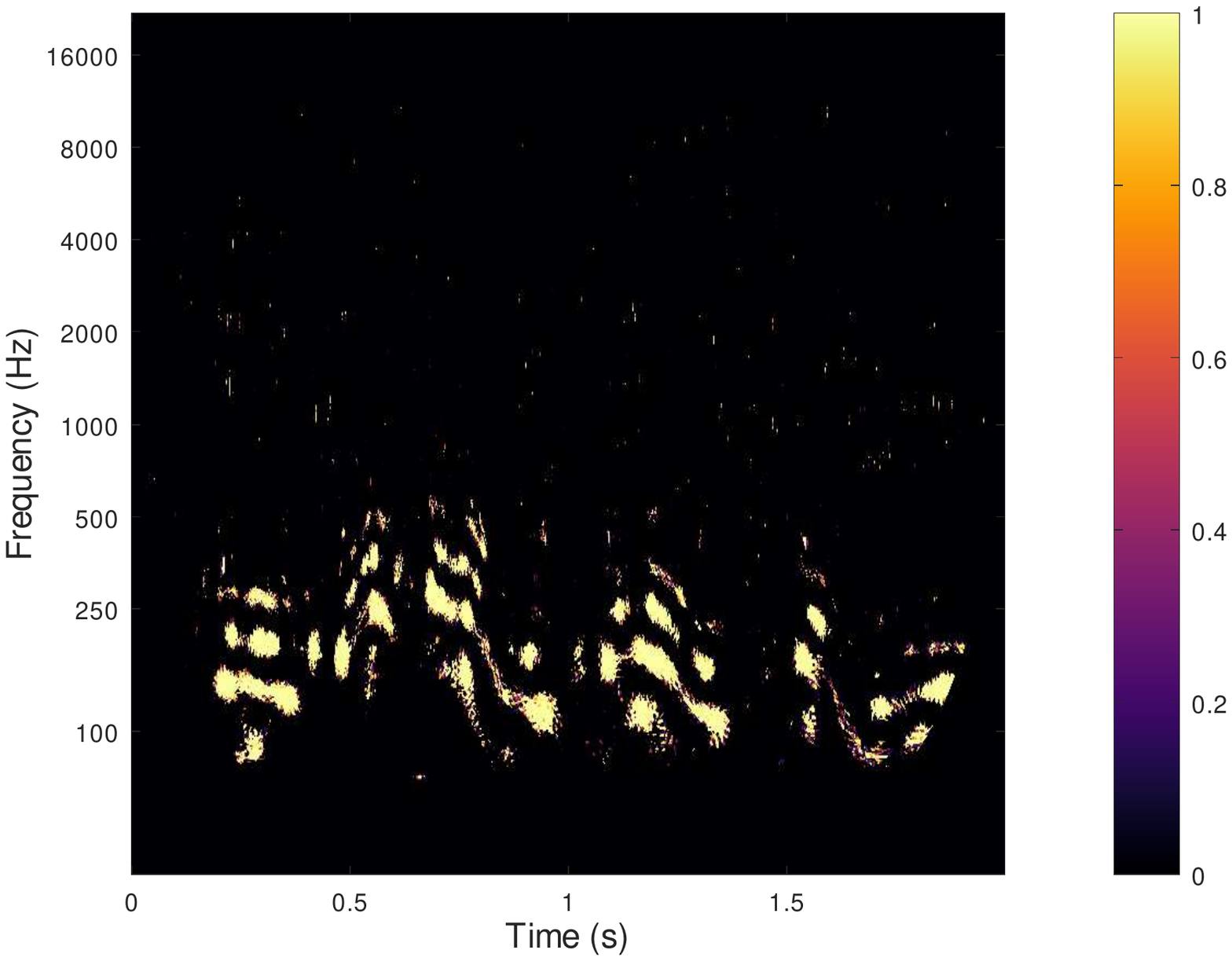}
\end{minipage}
\begin{minipage}[c]{0.45\textwidth}
\includegraphics[width=1\textwidth,trim={1.5cm 7.8cm 2.3cm 7.1cm},clip]{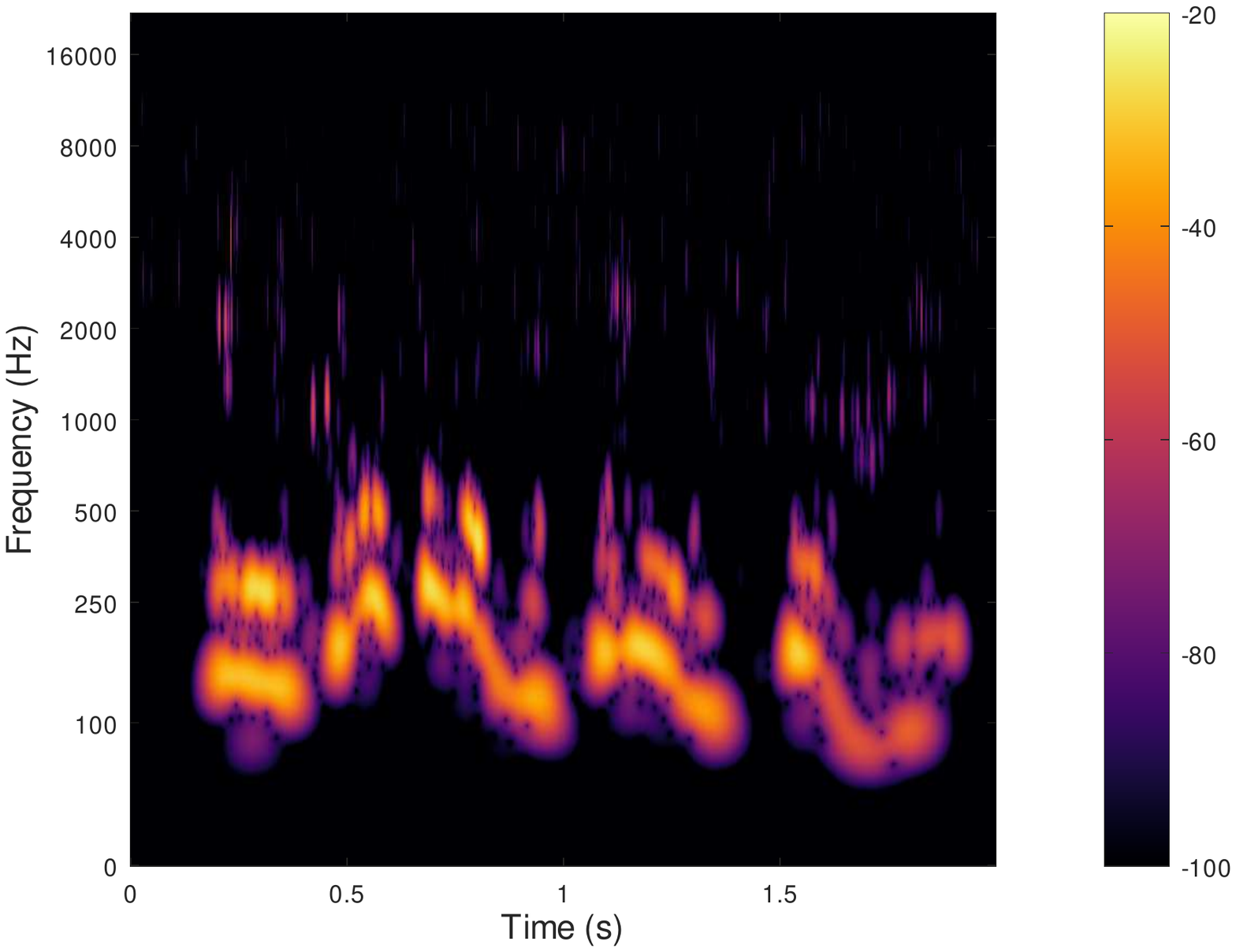}
\end{minipage}
\begin{minipage}[c]{0.45\textwidth}
\includegraphics[width=1\textwidth,trim={1.5cm 7.8cm 2.3cm 7.1cm},clip]{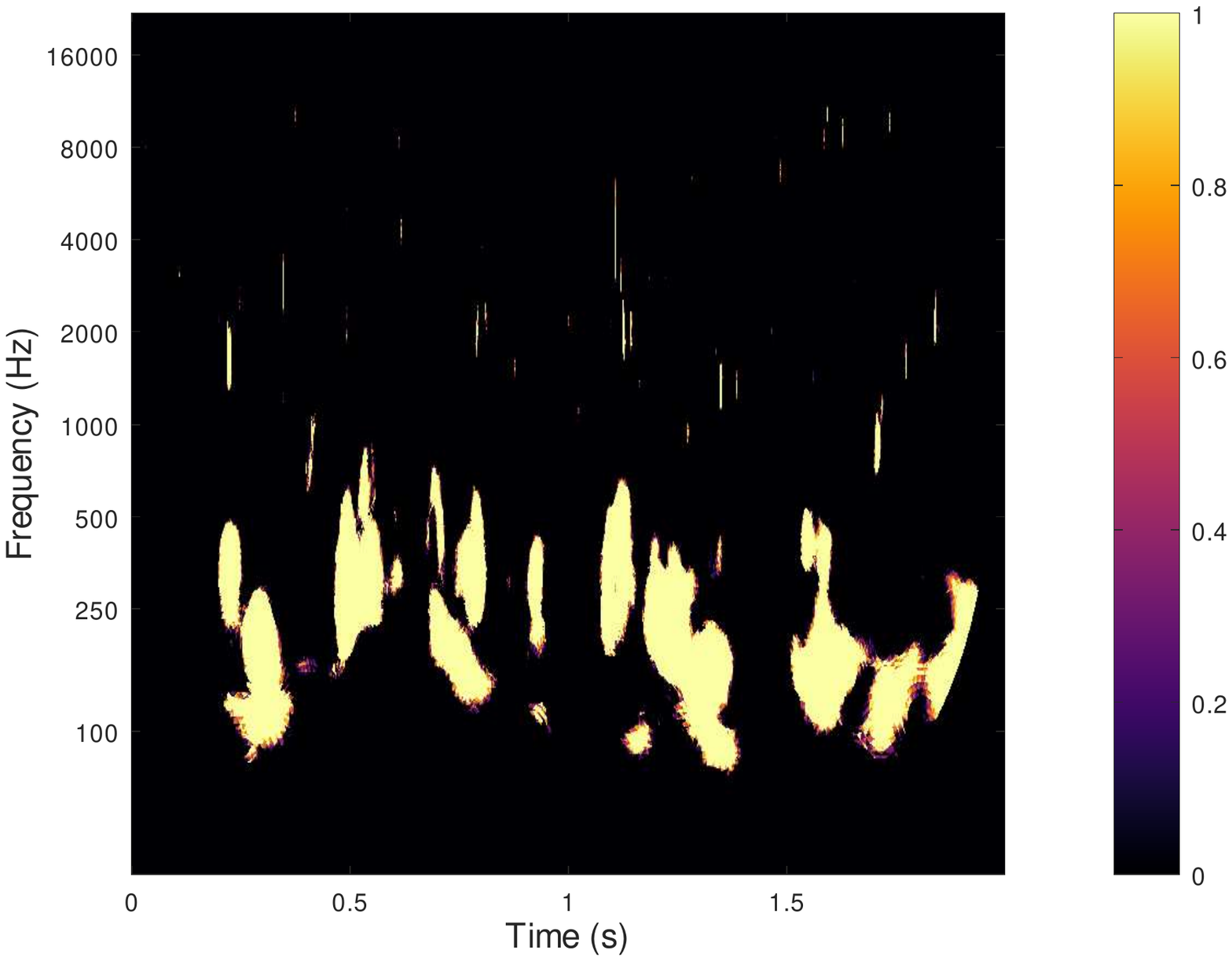}
\end{minipage}
\begin{minipage}[c]{0.45\textwidth}
\includegraphics[width=1\textwidth,trim={1.5cm 7.8cm 2.3cm 7.1cm},clip]{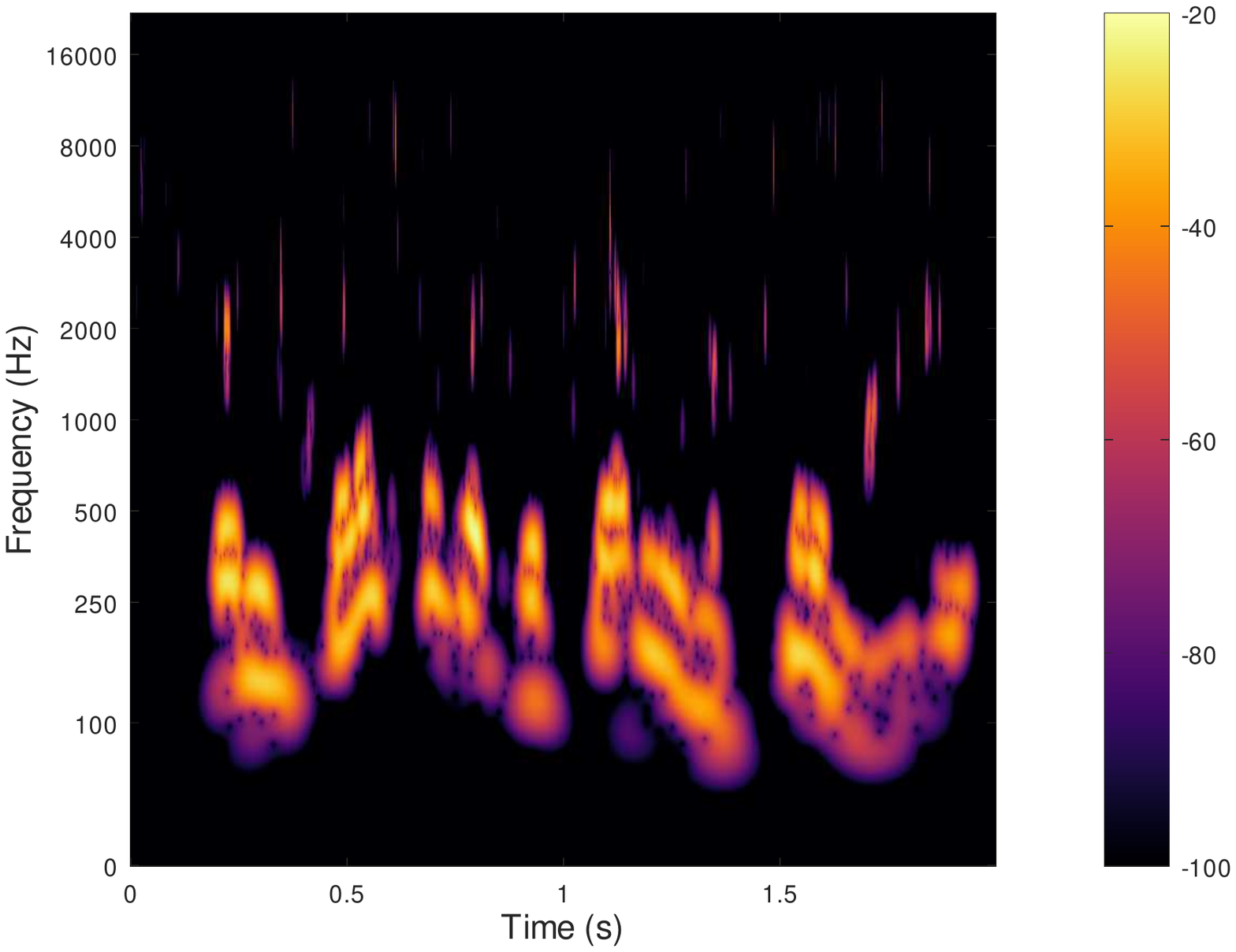}
\end{minipage}
\begin{minipage}[c]{0.45\textwidth}
\includegraphics[width=1\textwidth,trim={1.5cm 6.9cm 2.3cm 7.1cm},clip]{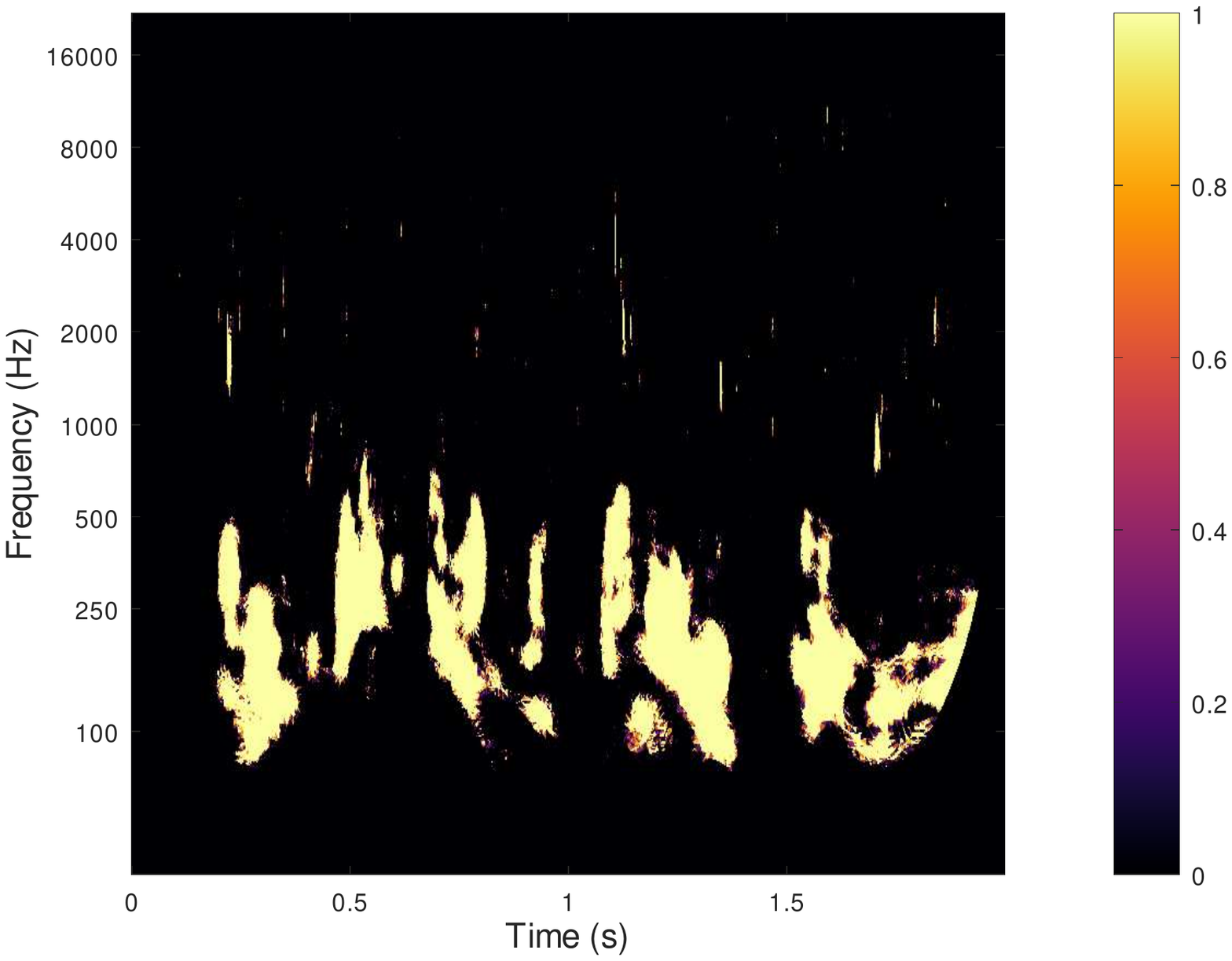}
\end{minipage}
\begin{minipage}[c]{0.45\textwidth}
\includegraphics[width=1\textwidth,trim={1.5cm 6.9cm 2.3cm 7.1cm},clip]{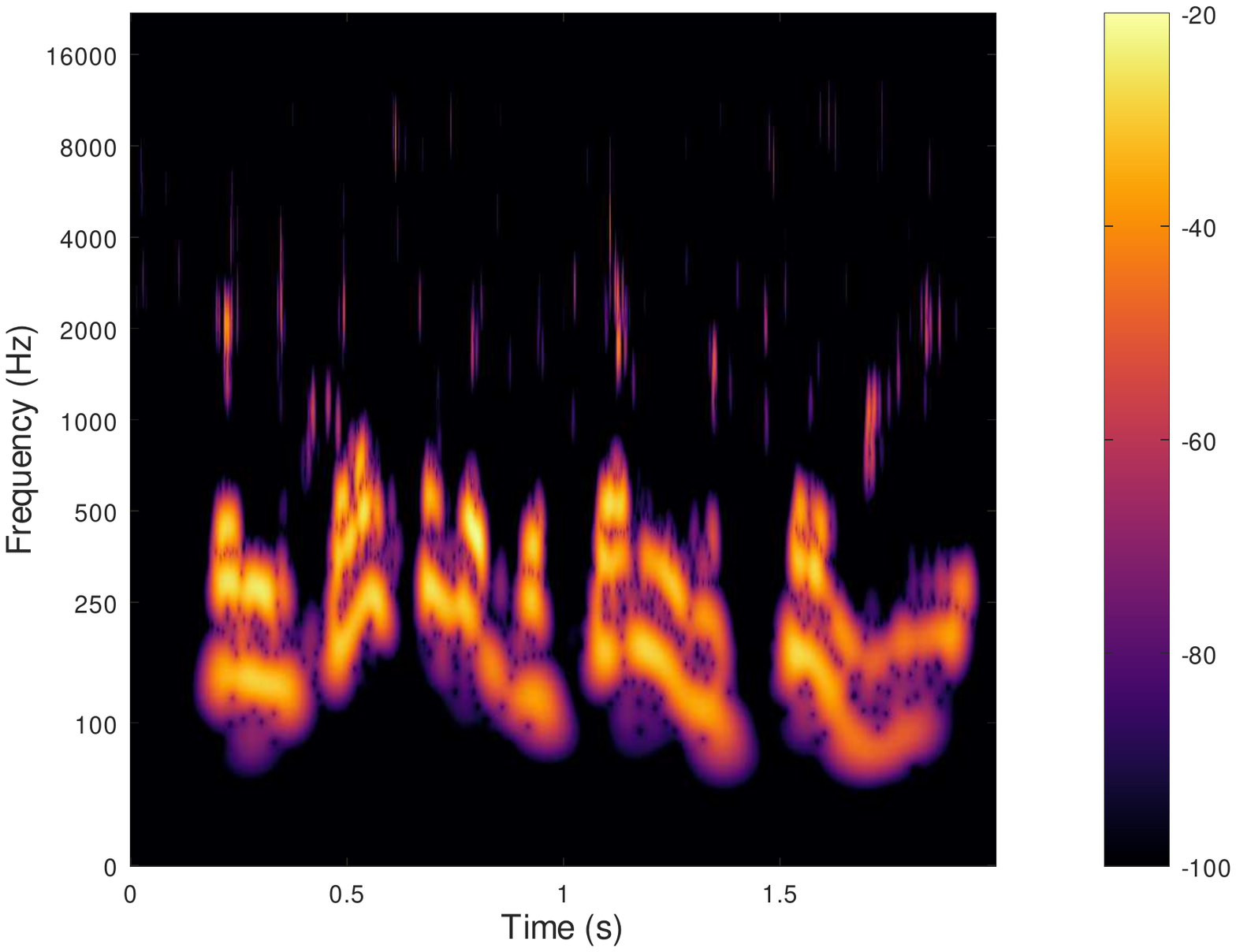}
\end{minipage}
\captionsetup{width=\linewidth}
\caption{Scalogram of a two second male voice signal: clean signal (top left), signal in white noise (top right), mask and scalogram of filtered signal based on: first intensity estimation (second row), pair correlation estimation (third row), and both estimators (bottom row).
All cases were additionally filtered to region of interest for comparison.}
\label{fig:voice}
\end{figure}
For the filtering scheme presented in \eqref{eq:maskint}, we chose $a=1$ and $b=4.32$.
This value of $b$ was obtained in simulations as the $0.999$ quantile of the calculated statistic for white noise and thus results in almost complete elimination of noise. 
Only the most prominent signal components are preserved.
For the filtering scheme presented in \eqref{eq:maskpc}, we chose $a_{k,\ell}=1/\hat{\sigma}^2_{r_{0,k},r_{1,\ell}}$ where $\hat{\sigma}_{r_{0,k},r_{1,\ell}}$ are the sample standard deviations from Table~\ref{tab:paircorrvar} that we obtained from white noise simulations.
We further chose $b=5.42$, again obtained in simulations as the $0.999$ quantile of the calculated statistic for white noise.
Furthermore, as above, we excluded parts of the scalogram where hyperbolic circles did no longer fit into the observation window and thus estimators would have to deal with non-negligible boundary effects. 
This was simply done by filtering all signals with a mask that is zero outside of this ``region of interest'' and equal to one inside.
It is difficult to interpret the masks found by our estimators.
For the first intensity function, we observe that the mask primarily covers areas with no zeros in the vicinity and some regions with a too high density of zeros between those regions.
For the mask generated by estimating the pair correlation function there is no obvious interpretation. 
Nevertheless, the estimated pair correlation function finds areas where the point pattern deviates from the pattern expected for white noise and thus we expect that a combination of the masks will result in better performance. 
The final mask in Fig.~\ref{fig:voice} illustrates an approach where we first added the deviations for both estimators, and applied thresholding afterwards.
The threshold was again determined as the simulated $0.999$ quantile of the corresponding statistic in the white noise case.

As a second example we present the castanets signal~27 in~\cite{sqam} from $0.4$s to $2.4$s. 
We expected that for this percussion signal that has properties very similar to white noise, our filtering framework will perform poorly. 
However, as one can see in Fig.~\ref{fig:cast}, the main onsets of each transient is actually identified correctly. 
On the other hand, the noise-like signals between the transients  are filtered out as expected.
\begin{figure}[tbp]%
\begin{minipage}[c]{0.45\textwidth}
\includegraphics[width=1\textwidth,trim={1.5cm 7.8cm 2.3cm 7.1cm},clip]{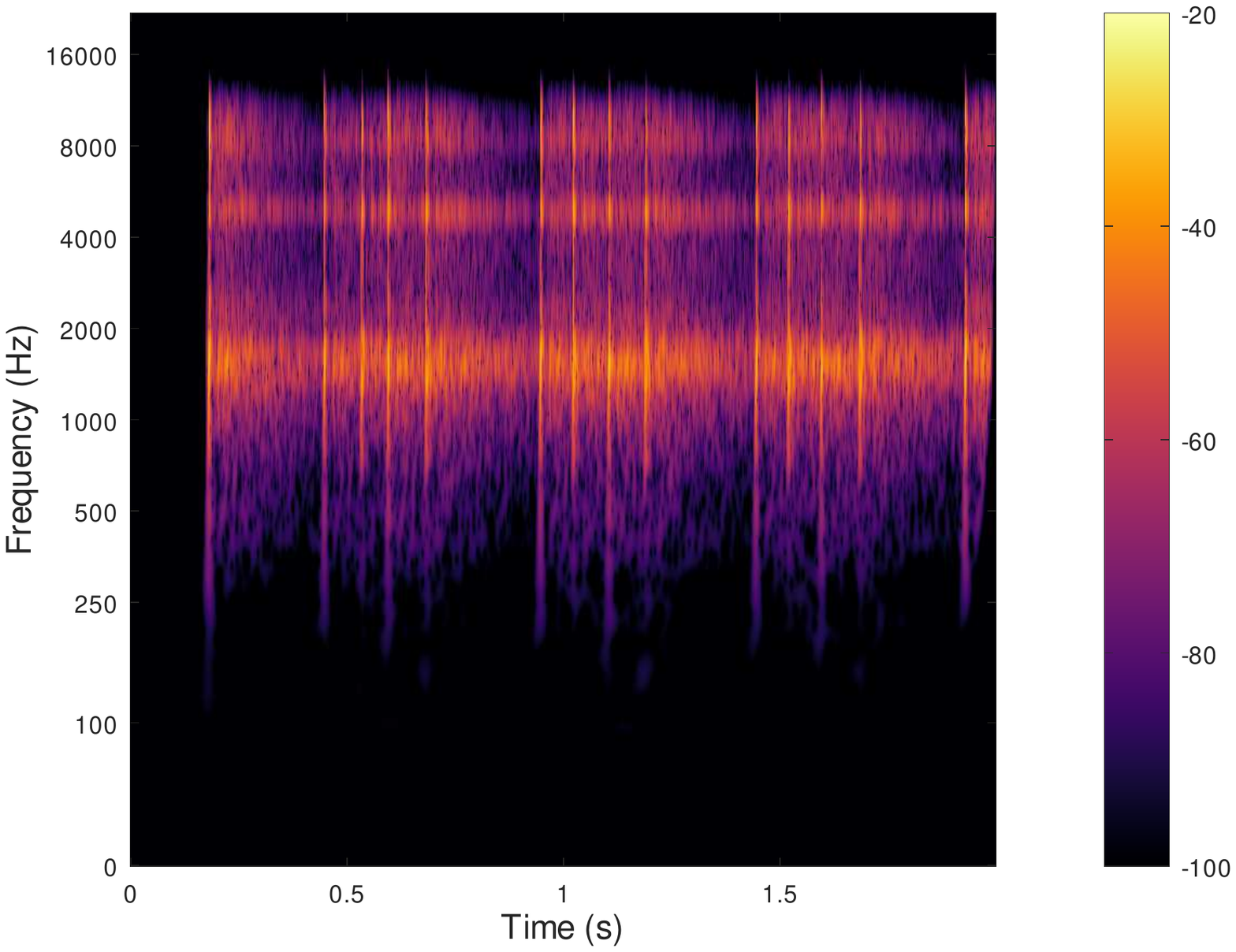}
\end{minipage}
\begin{minipage}[c]{0.45\textwidth}
\includegraphics[width=1\textwidth,trim={1.5cm 7.8cm 2.3cm 7.1cm},clip]{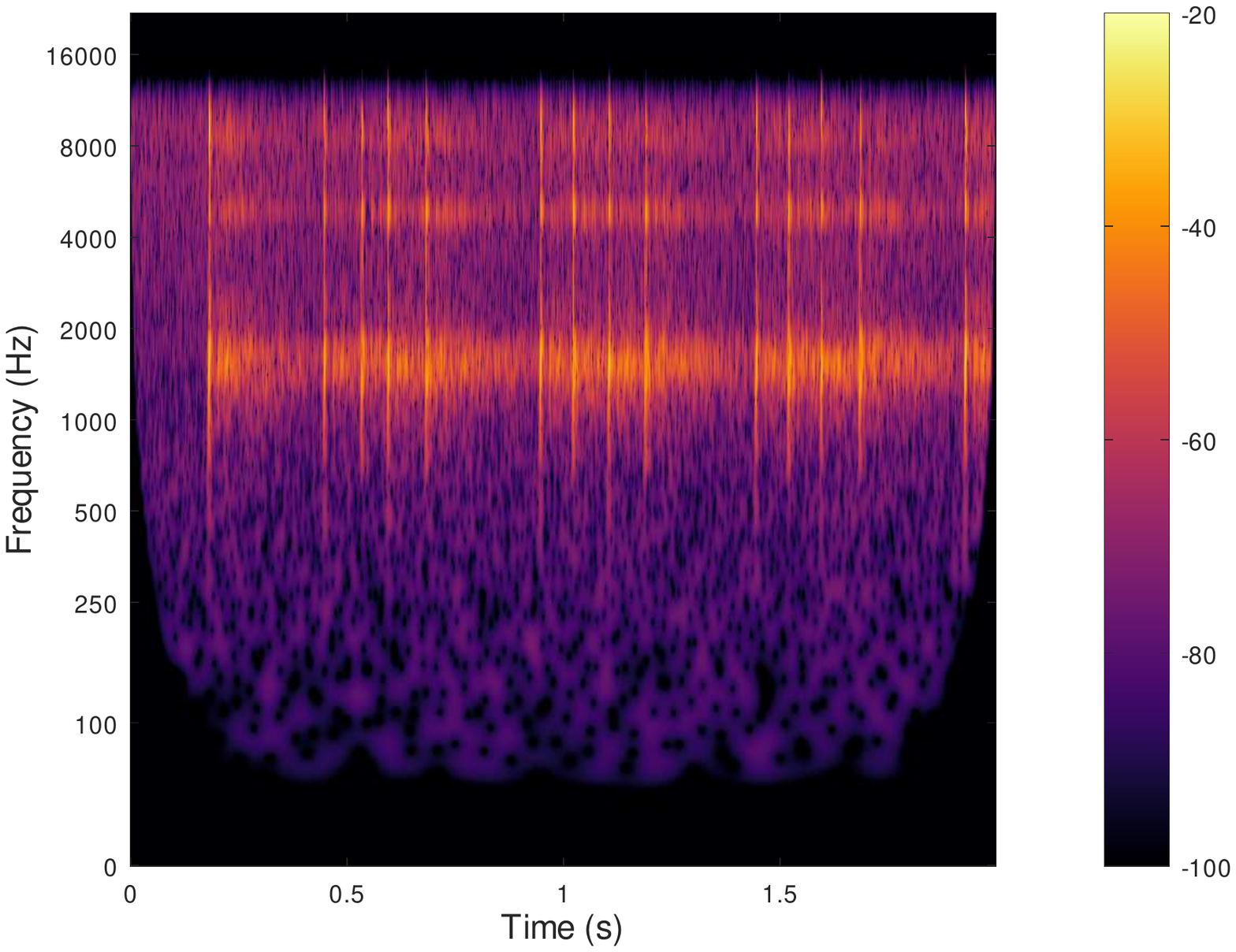}
\end{minipage}
\begin{minipage}[c]{0.45\textwidth}
\includegraphics[width=1\textwidth,trim={1.5cm 7.8cm 2.3cm 7.1cm},clip]{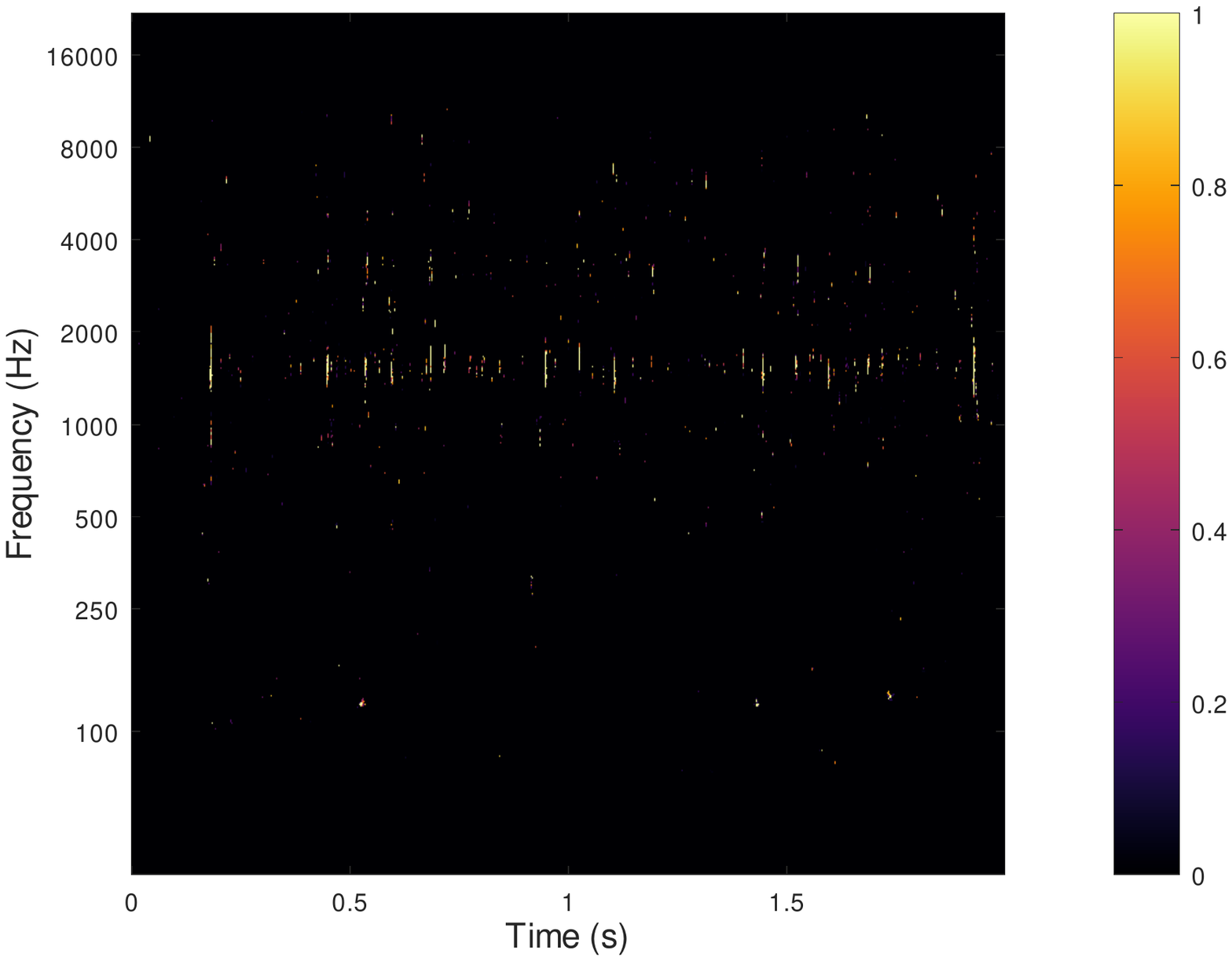}
\end{minipage}
\begin{minipage}[c]{0.45\textwidth}
\includegraphics[width=1\textwidth,trim={1.5cm 7.8cm 2.3cm 7.1cm},clip]{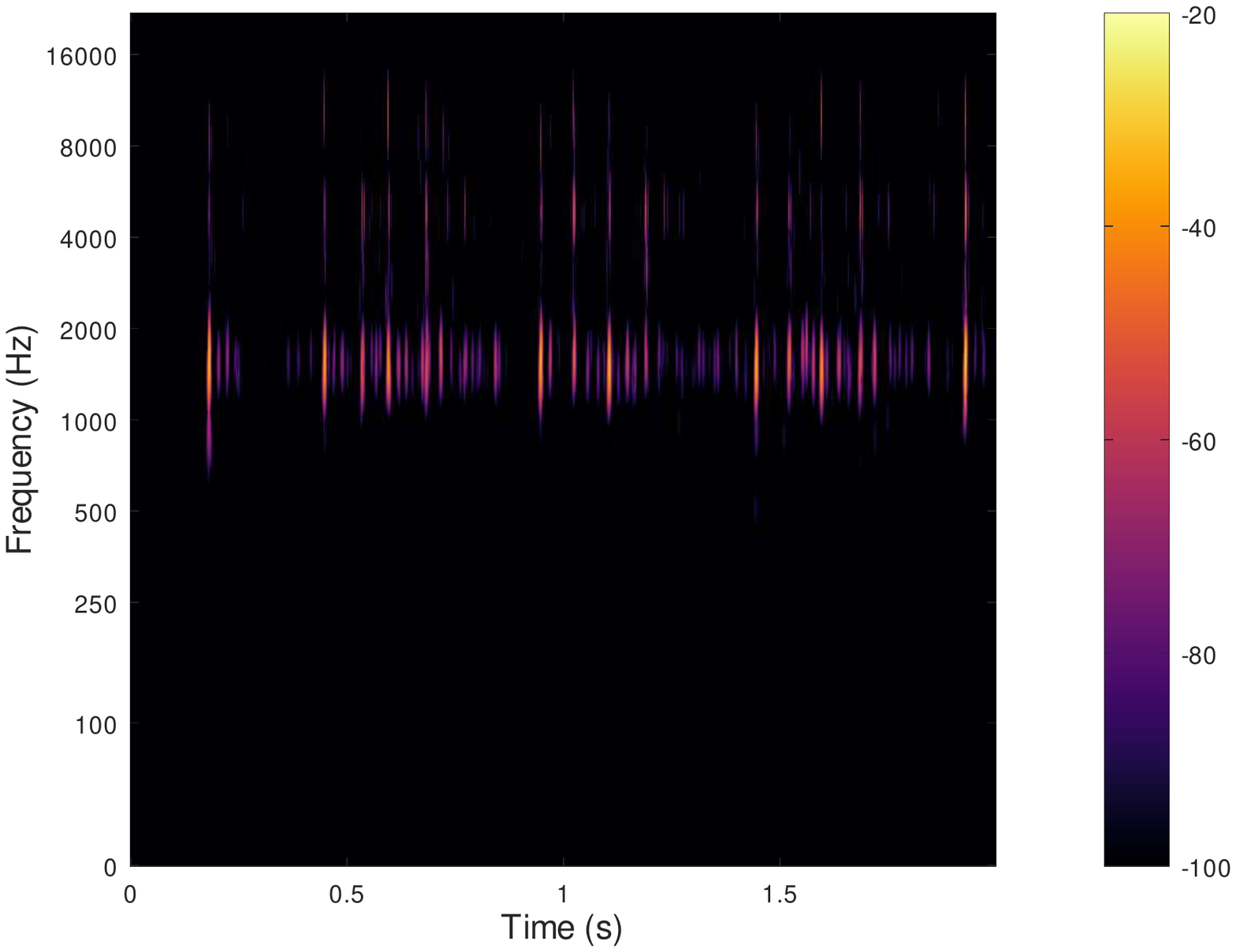}
\end{minipage}
\begin{minipage}[c]{0.45\textwidth}
\includegraphics[width=1\textwidth,trim={1.5cm 7.8cm 2.3cm 7.1cm},clip]{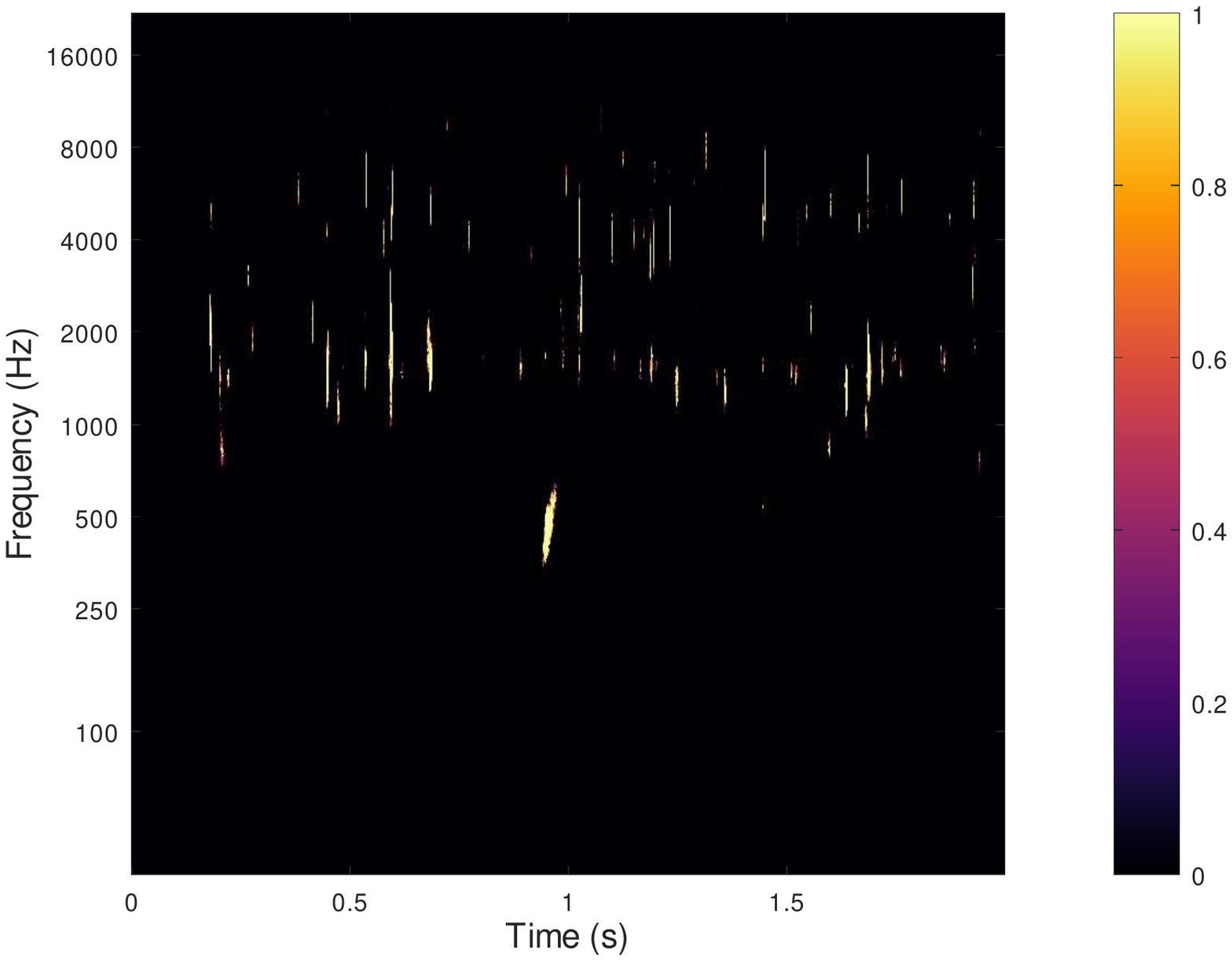}
\end{minipage}
\begin{minipage}[c]{0.45\textwidth}
\includegraphics[width=1\textwidth,trim={1.5cm 7.8cm 2.3cm 7.1cm},clip]{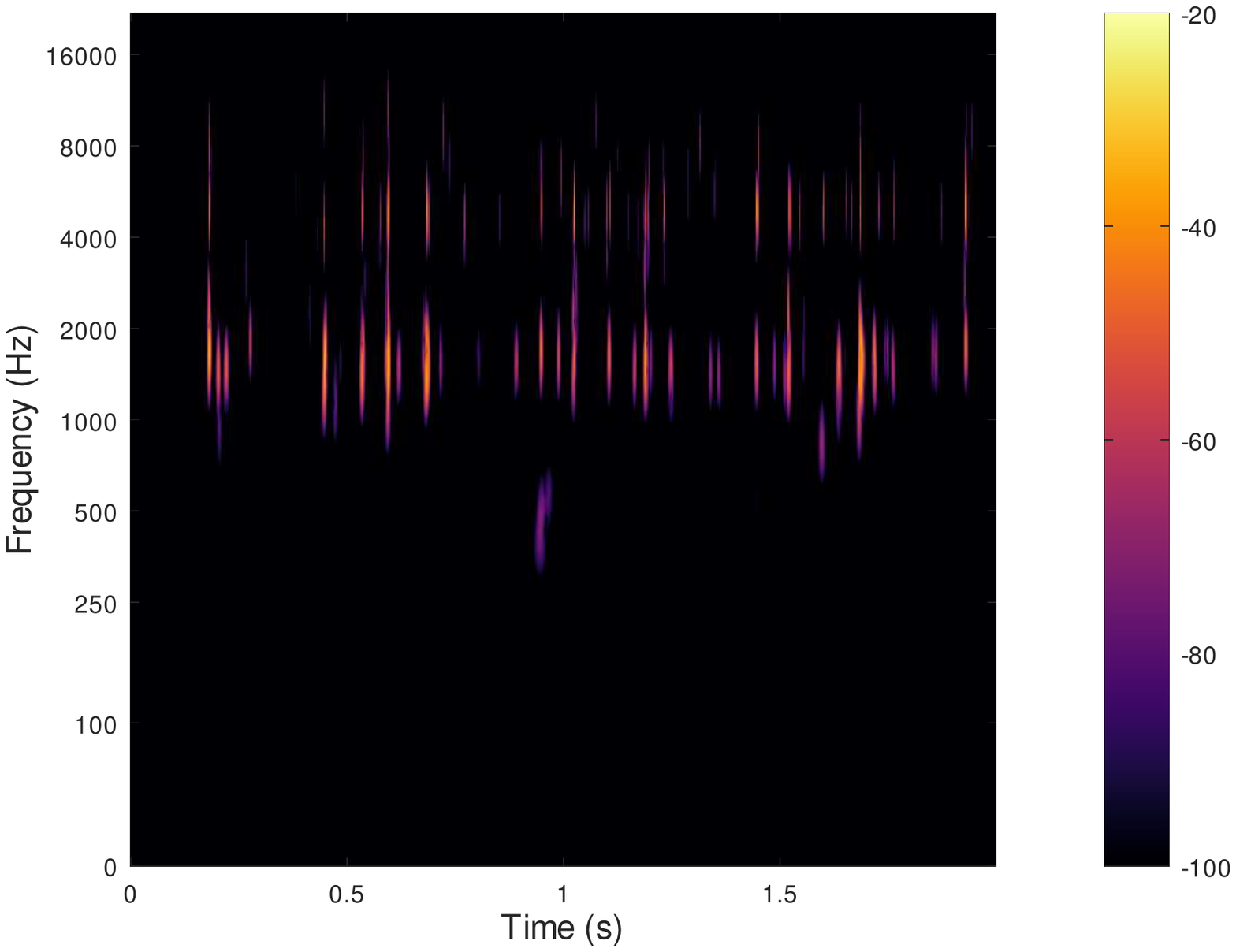}
\end{minipage}
\begin{minipage}[c]{0.45\textwidth}
\includegraphics[width=1\textwidth,trim={1.5cm 6.9cm 2.3cm 7.1cm},clip]{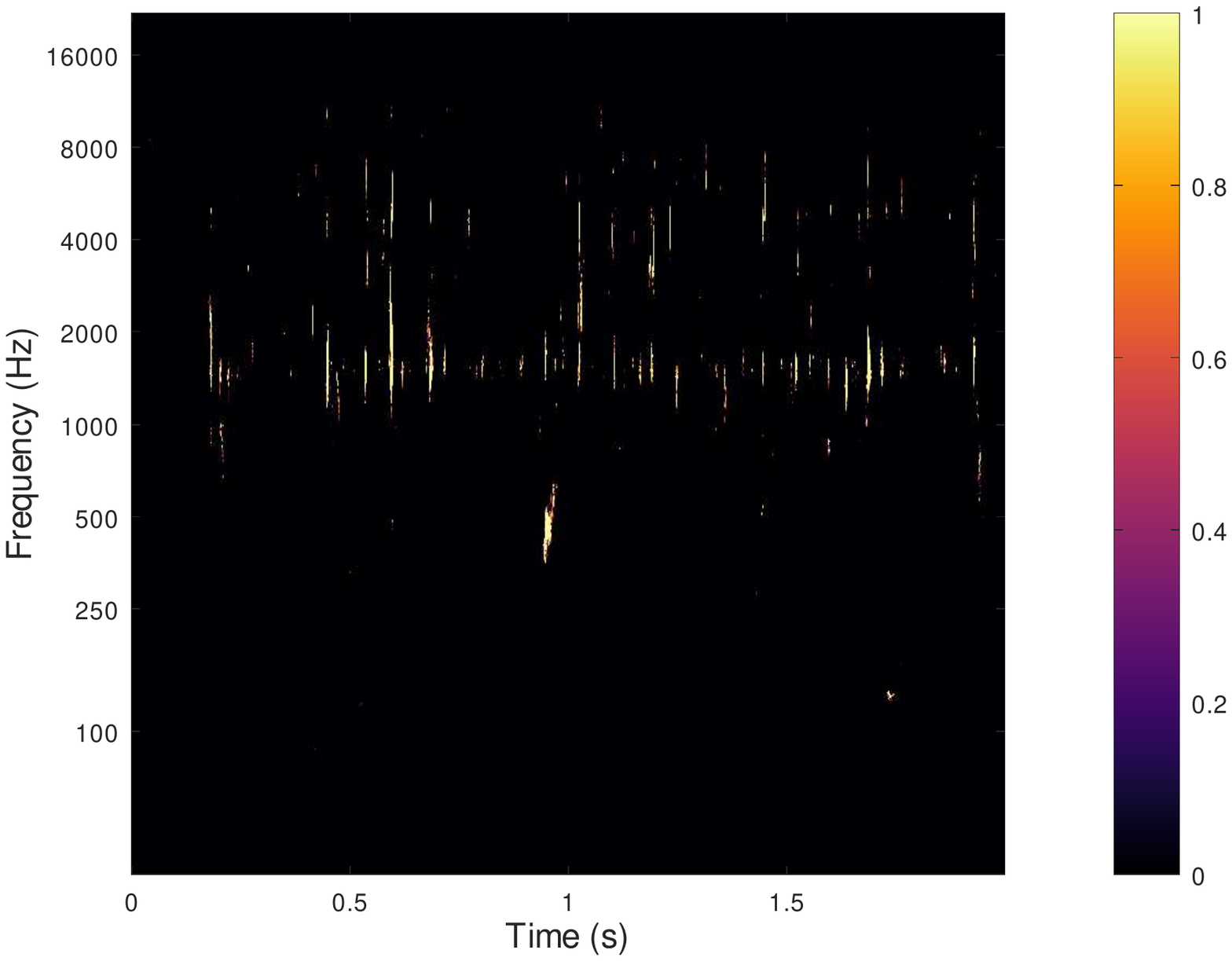}
\end{minipage}
\begin{minipage}[c]{0.45\textwidth}
\includegraphics[width=1\textwidth,trim={1.5cm 6.9cm 2.3cm 7.1cm},clip]{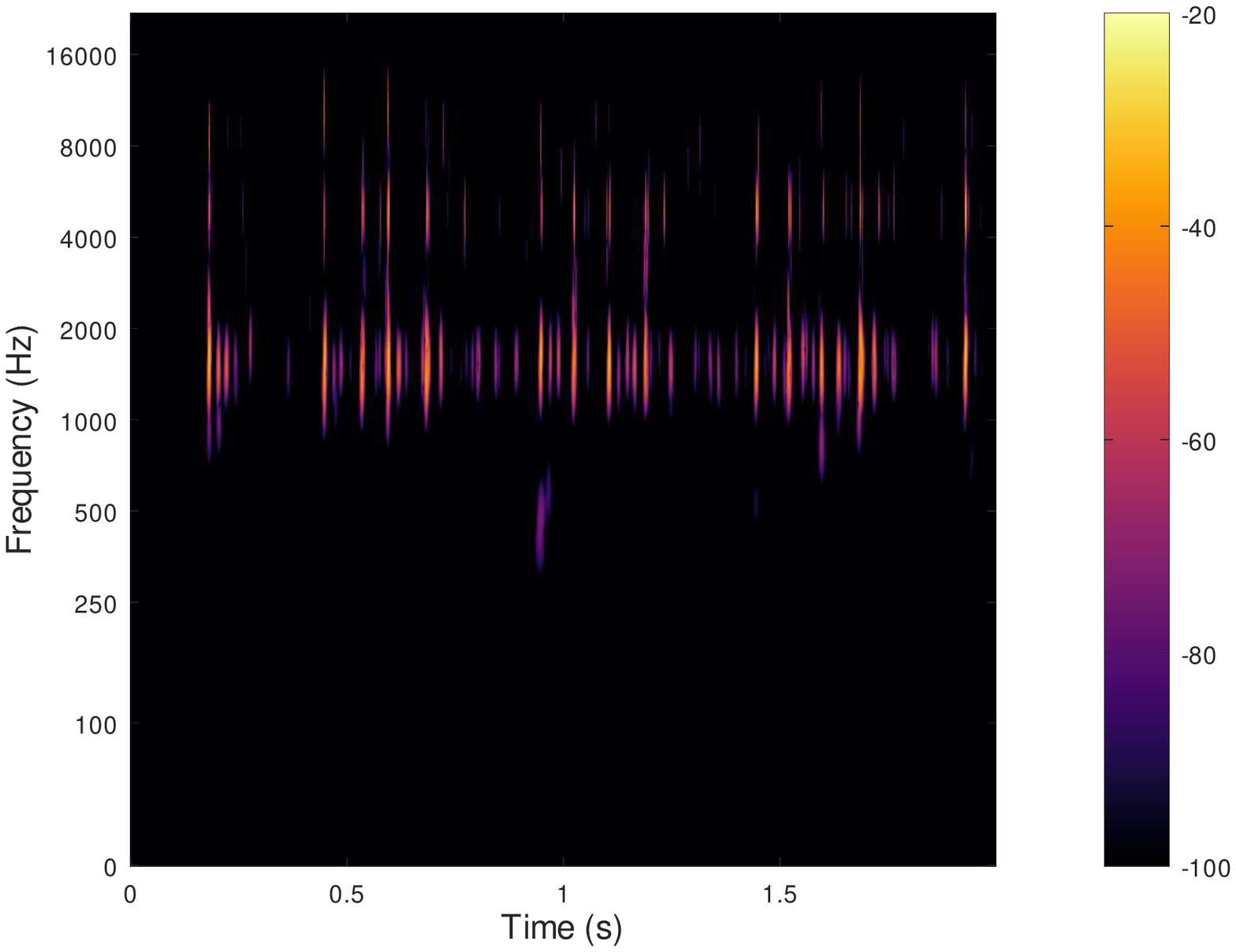}
\end{minipage}
\captionsetup{width=\linewidth}
\caption{Scalogram of a two second castanets signal: clean signal (top left), signal in white noise (top right), mask and scalogram of filtered signal based on: first intensity estimation (second row), pair correlation estimation (third row), and both estimators (bottom row).
All cases were additionally filtered to region of interest for comparison.}
\label{fig:cast}
\end{figure}

We emphasize that in all cases the filtering procedure is only based on the zeros of the scalogram, i.e., the modulus at other points was not known to the algorithm while generating the filter masks. 
This is in stark contrast to the common setting of thresholding where only the high-energy components are used to design the mask and we thus expect that these complementary approaches can be combined for a richer analysis and filtering of signals. 

We also applied the algorithm to a white noise signal to see if false positives appear in this setting. 
The result is presented in Fig.~\ref{fig:noise}.
\begin{figure}%
\begin{minipage}[c]{0.48\textwidth}
\includegraphics[width=1\textwidth,trim={1.5cm 7.8cm 2.3cm 7.1cm},clip]{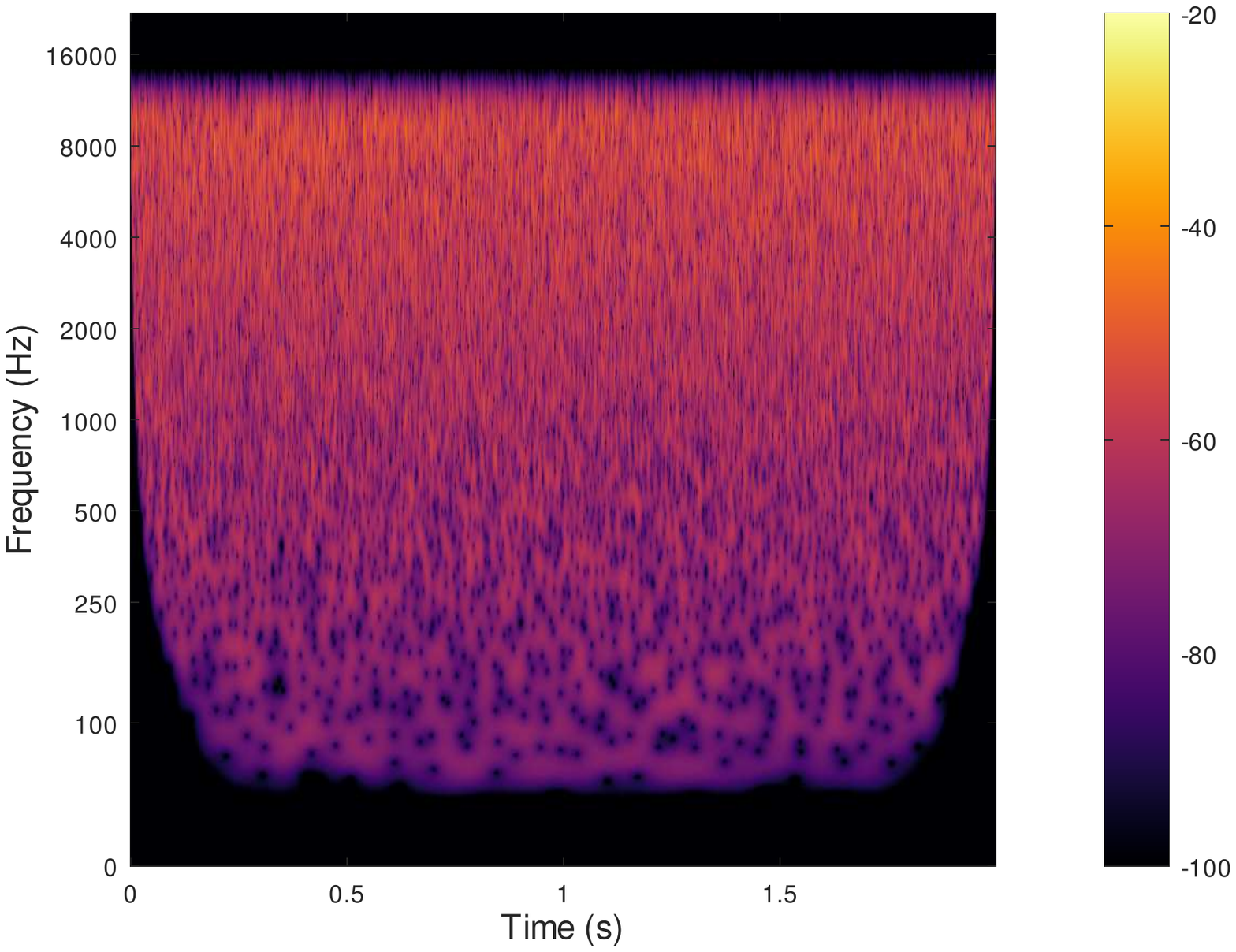}
\end{minipage}
\begin{minipage}[c]{0.48\textwidth}
\includegraphics[width=1\textwidth,trim={1.5cm 7.8cm 2.3cm 7.1cm},clip]{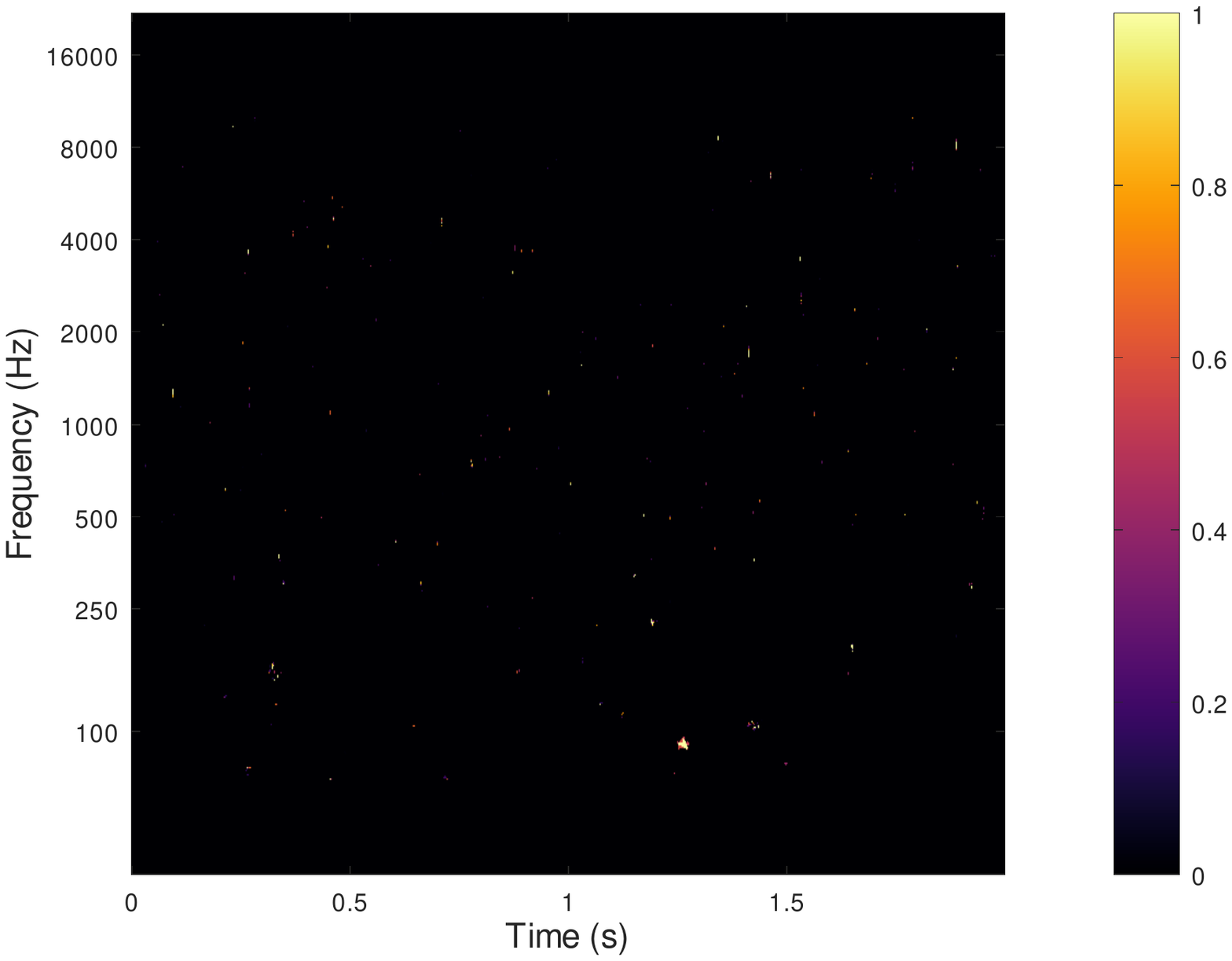}
\end{minipage}
\begin{minipage}[c]{0.48\textwidth}
\includegraphics[width=1\textwidth,trim={1.5cm 6.9cm 2.3cm 7.1cm},clip]{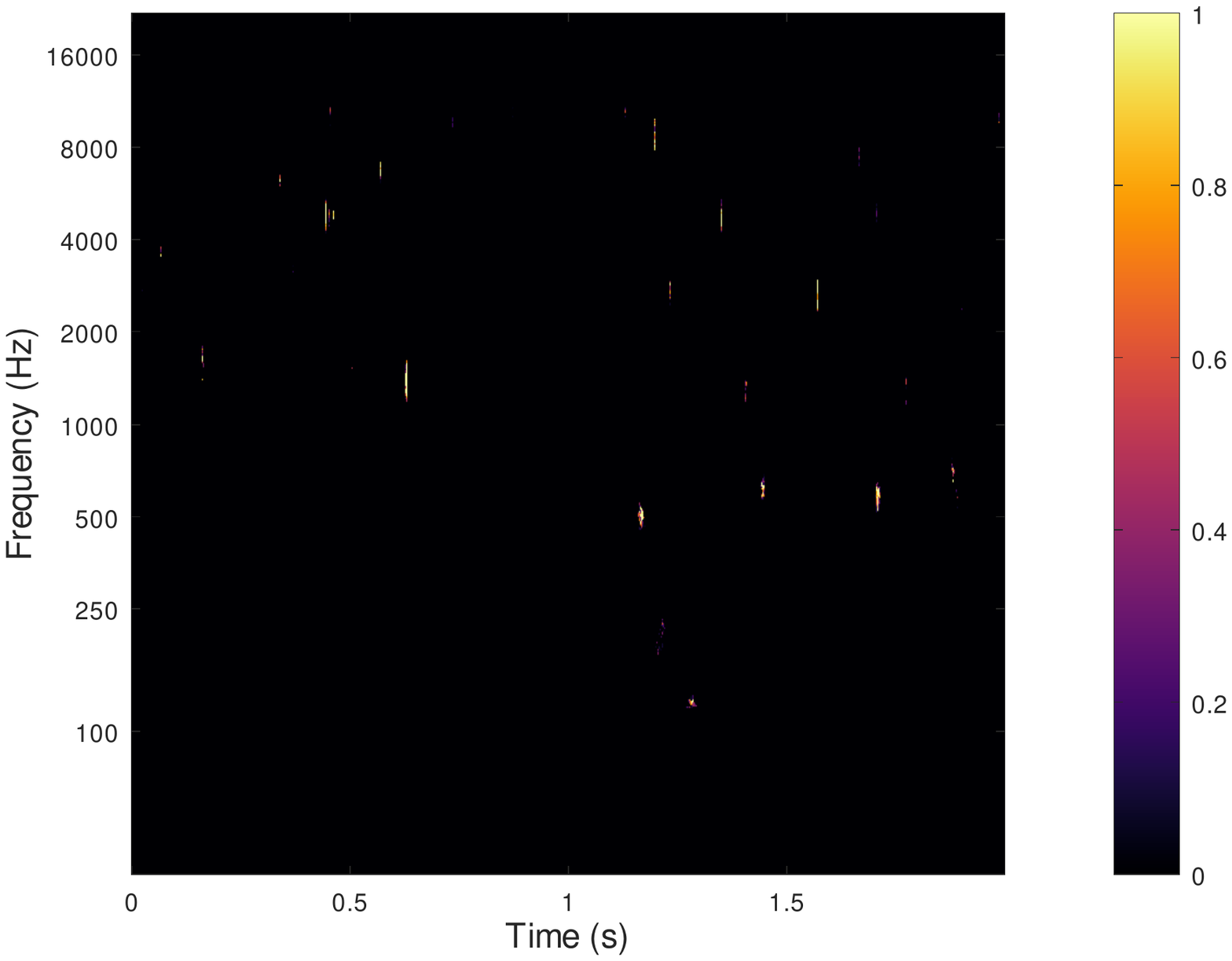}
\end{minipage}
\begin{minipage}[c]{0.48\textwidth}
\includegraphics[width=1\textwidth,trim={1.5cm 6.9cm 2.3cm 7.1cm},clip]{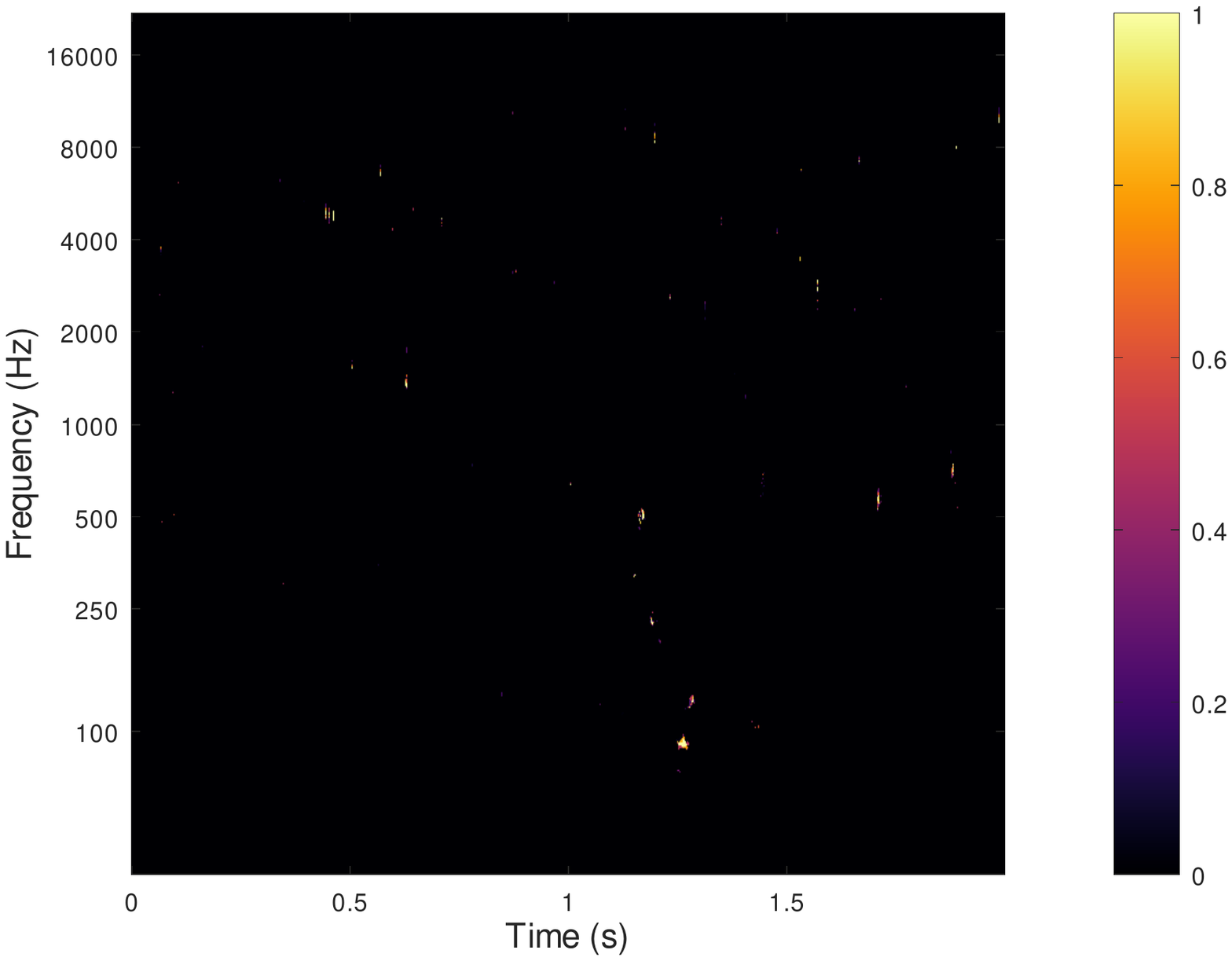}
\end{minipage}
\caption{Scalogram of a filtered two second white noise signal: white noise (top left), mask based on first intensity estimation (top right), mask  based on pair correlation estimation (bottom left), and mask based on both estimators (bottom right).
All cases were filtered to region of interest for comparison.
As expected by the choice of our thresholds almost all of the noise is filtered out.}
\label{fig:noise}
\end{figure}
As expected, we observe that there are hardly any false positives in the observation window for the parameters we chose.

\section{Conclusion}\label{sec:conclusion}

We presented an approach to identify signal components based only on the zero distribution of the continuous wavelet transform.
The basic idea for this method was proposed in \cite{flandrin2015time} for the short time frequency transform setting.
Using analyticity inducing wavelets, we used the relation to hyperbolic GAFs to obtain a basic understanding of the distribution of the zero pattern of wavelet transformed white noise.
This relation allowed us to use established results for the first intensity function and the pair correlation function to design filtering procedures based on local estimators of these quantities.
Furthermore, next to simulations also an asymptotic result ensures that the continuous setting can be well approximated in the discrete setting and thus the insights obtained from the continuous regime can be used in a realistic scenario.

Many questions regarding optimal choices of parameters and thresholds can be considered in future work.
For example, the exact role of the parameter $\alpha$ is largely unknown and so far we only observed in limited experiments that the parameter $\alpha$ manages a tradeoff between time and scale resolution.
Furthermore, the masks can obviously be adapted to various settings using, e.g., higher exponents in the difference calculation.
Finally, the approach in this work will complement classic methods that focus primarily on high energy components and thus seem almost orthogonal to our method. 
We expect that the identification of signal components in noise can be used in a multitude of applications way beyond the toy example of audio signals that was primarily chosen for illustrative reasons in this work.

\appendix

\section{Lemmata}\label{app:lem}

\begin{lemma}\label{lem:variance}
  Let $F(z)=W^{(d)}_{\mwlet_{\alpha }}(\mathcal{N}_{L,T_s})(z) - W_{\mwlet_{\alpha }}(\mathcal{N})(z)$
  where $W_{\mwlet_{\alpha }}(\mathcal{N})$ is given in \eqref{eq:wavelet_wn} and $W^{(d)}_{\mwlet_{\alpha }}(\mathcal{N}_{L,T_s})$ in \eqref{eq:wtdisc}.
  Furthermore, let  $T_s=L^{-\frac{\alpha}{\alpha+2}}$.
  Then the maximal variance $\sigma_{D,L}^2 = \sup_{z \in D} \E[(F_L(z))^2]$ on a compact set $D\subseteq \Pi^+$ is bounded by
  \be
    \sigma_{D,L}^2 \leq \bigg(2 \Gamma^2\bigg(\frac{\alpha+3}{2}\bigg) \frac{1}{y_{\textrm{min}}^2}
    + 2 \Gamma^2\bigg(\frac{\alpha+1}{2}\bigg)   \frac{(2y_{\textrm{max}})^{\alpha-1}}{ (\alpha-1) } \bigg) L^{-2\frac{\alpha-1}{\alpha+2}} 
  \ee
  for all $L \geq (2 x_{\textrm{max}})^{\frac{\alpha+2}{2}}$ where  $x_{\textrm{max}}:=   \sup_{u+iv\in D} \abs{u}$, $y_{\textrm{max}}:= \sup_{u+iv\in D}v$,  and $y_{\textrm{min}}:= \inf_{u+iv\in D}v$.
\end{lemma}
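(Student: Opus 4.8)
The plan is to reduce the pointwise variance to an $L^2$-estimate on a single deterministic function and then split that estimate into a \emph{discretization} part and a \emph{truncation} part. Writing $\phi_z := \transl_x\dil_y\mwlet_\alpha$, so that $\phi_z(t)=y^{-1/2}\mwlet_\alpha\big((t-x)/y\big)$ by \eqref{eq:defwltransform}, the difference $F_L(z)$ is, by the linearity of $\mathcal N$ together with \eqref{eq:wavelet_wn} and \eqref{eq:wtdisc}, simply $\mathcal N(g_z)$ for the deterministic function
\[
  g_z = \sum_{\ell=-L+1}^{L}\phi_z(\ell T_s)\,\ind_{((\ell-1)T_s,\ell T_s)} - \phi_z .
\]
By the defining isometry $\norm{\mathcal N(s)}_G^2=\norm{s}^2$ of white noise, the variance equals $\norm{g_z}_{\LtR}^2$, and it suffices to bound this uniformly for $z=x+iy\in D$. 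First I would split $\R$ into the sampling window $(-LT_s,LT_s)$, on which $g_z(t)=\phi_z(\ell T_s)-\phi_z(t)$ for $t\in((\ell-1)T_s,\ell T_s)$, and its complement, on which $g_z=-\phi_z$. This yields
\[
  \norm{g_z}_{\LtR}^2 = \sum_{\ell=-L+1}^{L}\int_{(\ell-1)T_s}^{\ell T_s}\bigabs{\phi_z(\ell T_s)-\phi_z(t)}^2\,\mathrm dt \;+\; \int_{\abs{t}\ge LT_s}\bigabs{\phi_z(t)}^2\,\mathrm dt ,
\]
the two terms to be estimated separately.

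For the discretization term I would use the smoothness of $\mwlet_\alpha$ in an $L^2$ form. Writing $\phi_z(\ell T_s)-\phi_z(t)=\int_t^{\ell T_s}\phi_z'(u)\,\mathrm du$ and applying Cauchy--Schwarz on each interval bounds the integrand pointwise by $(\ell T_s-t)\int_{(\ell-1)T_s}^{\ell T_s}\abs{\phi_z'}^2$; integrating in $t$ and summing the disjoint interval contributions gives $\tfrac{T_s^2}{2}\norm{\phi_z'}_{\LtR}^2$. The dilation scaling gives $\norm{\phi_z'}_{\LtR}^2=y^{-2}\norm{\mwlet_\alpha'}_{\LtR}^2\le y_{\min}^{-2}\norm{\mwlet_\alpha'}_{\LtR}^2$, so the only remaining ingredient is the explicit constant $\norm{\mwlet_\alpha'}_{\LtR}^2$. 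This I would obtain from the closed form $\mwlet_\alpha(t)=\frac{\Gamma((\alpha+1)/2)}{\sqrt{2\pi}}(1-it)^{-(\alpha+1)/2}$ (equivalently, by Plancherel from $\widehat{\mwlet_\alpha}$), which gives $\abs{\mwlet_\alpha'(t)}=\frac{\Gamma((\alpha+3)/2)}{\sqrt{2\pi}}(1+t^2)^{-(\alpha+3)/4}$, so that a Beta-integral and monotonicity of $\Gamma$ yield a constant bounded by $\Gamma^2((\alpha+3)/2)$. Combined with $T_s^2=L^{-2\alpha/(\alpha+2)}\le L^{-2(\alpha-1)/(\alpha+2)}$, this produces the first summand of the claimed bound.

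The same closed form drives the truncation term, which I expect to be the main obstacle, since it must be uniform in $z\in D$ and relies on the decay of $\mwlet_\alpha$. After the substitution $s=(t-x)/y$ the truncation integral becomes $\int_{\{s:\abs{sy+x}\ge LT_s\}}\abs{\mwlet_\alpha(s)}^2\,\mathrm ds$. Here the hypothesis $L\ge(2x_{\max})^{(\alpha+2)/2}$ is exactly what forces $LT_s=L^{2/(\alpha+2)}\ge 2x_{\max}$, whence $\abs{sy+x}\ge LT_s$ implies $\abs{s}\ge(LT_s-x_{\max})/y\ge LT_s/(2y_{\max})=:R$; this is where $x_{\max}$ and $y_{\max}$ enter. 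Using $\abs{\mwlet_\alpha(s)}^2\le\frac{\Gamma^2((\alpha+1)/2)}{2\pi}(1+s^2)^{-(\alpha+1)/2}\le\frac{\Gamma^2((\alpha+1)/2)}{2\pi}\abs{s}^{-\alpha}$ (the last step from $1+s^2\ge s^2$, valid for every $s\neq0$) and integrating the tail $\int_{\abs{s}\ge R}\abs{s}^{-\alpha}\,\mathrm ds=\frac{2R^{-(\alpha-1)}}{\alpha-1}$ (legitimate because $\alpha>1$), the bound becomes a multiple of $(2y_{\max})^{\alpha-1}(LT_s)^{-(\alpha-1)}=(2y_{\max})^{\alpha-1}L^{-2(\alpha-1)/(\alpha+2)}$, i.e.\ the second summand. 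Adding the two contributions and factoring out $L^{-2(\alpha-1)/(\alpha+2)}$ gives the stated inequality. The delicate points are (i) reading off the polynomial decay order of $\mwlet_\alpha$ from the $\xi^{(\alpha-1)/2}$ singularity of $\widehat{\mwlet_\alpha}$ at the origin, and (ii) choosing to integrate the relaxed power $\abs{s}^{-\alpha}$ rather than the sharp $\abs{s}^{-(\alpha+1)}$, so that the resulting exponent of $L$ matches $-2(\alpha-1)/(\alpha+2)$, which is precisely the balance enforced by the choice $T_s=L^{-\alpha/(\alpha+2)}$.
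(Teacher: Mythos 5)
Your proof is correct and takes essentially the same route as the paper's: reduce the variance to $\norm{g_z}_{\LtR}^2$, split into a discretization error on $(-LT_s,LT_s)$ controlled by the derivative of $\mwlet_\alpha$ and a truncation error controlled by the decay $\abs{\mwlet_\alpha(t)}^2\lesssim(1+t^2)^{-\frac{\alpha+1}{2}}$, with the hypothesis on $L$ used exactly as you say to guarantee $x_{\textrm{max}}\le LT_s/2$. The only (immaterial) difference is that you bound the discretization term via Cauchy--Schwarz and $\norm{\phi_z'}_{\LtR}$, whereas the paper uses the pointwise Lipschitz bound $\sup_t\abs{\mwlet_\alpha'(t)}\le\Gamma\big(\tfrac{\alpha+3}{2}\big)$.
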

\begin{proof}
  Based on the definition of $\mathcal{N}$   the variance of $F_L(z)$ is given by
  \be
    \sigma_{z,L}^2 = \Biggnorm{\transl_x \dil_y \mwlet_{\alpha }
    - \sum_{\ell=-L+1}^L \transl_x \dil_y \mwlet_{\alpha }(\ell T_s)\ind_{((\ell-1)T_s, \ell T_s) }
    }_{\LtR}^2\,.
  \ee
  To bound $\sigma_{z,L}^2$ we first prove that $\mwlet_{\alpha}$ is Lipschitz continuous. 
  One way to see this is by bounding the derivative of $\mwlet_{\alpha}$.
  Due to the characterization of $\mwlet_{\alpha}$ by its Fourier transform in \eqref{eq_ga} and using the fact that taking the derivative in time-domain corresponds to multiplication by the frequency variable in frequency-domain (up to a multiplication by $i$), we see that $\mwlet_{\alpha}'(t) = i \mwlet_{\alpha+2}(t)$. 
  We furthermore have
  \be
    \abs{\mwlet_{\alpha+2}(t)} 
    = \biggabs{\frac{\Gamma(\frac{\alpha+3}{2})}{(1-i t)^{\frac{\alpha+3}{2}} }} 
    \leq \Gamma\bigg(\frac{\alpha+3}{2}\bigg),
  \ee
  i.e., the derivative is bounded and in turn $\mwlet_{\alpha}$ Lipschitz continuous with Lipschitz constant $\Gamma\big(\frac{\alpha+3}{2}\big)$.
  Now we can decompose $\sigma_z^2$ into a tail and central part.
  More specifically, we have
  \ba
    \sigma_{z,L}^2 
    & = 
    \int_{-L T_s}^{LT_s} 
      \bigg(\transl_x \dil_y \mwlet_{\alpha }(t)
    - \sum_{\ell=-L+1}^L \transl_x \dil_y \mwlet_{\alpha }(\ell T_s)\ind_{((\ell-1)T_s, \ell T_s) }(t)\bigg)^2 \, dt
    \notag \\
    & \quad 
    + \int_{\abs{t}>LT_s} 
      \big(\transl_x \dil_y \mwlet_{\alpha }(t)\big)^2 \, dt
    \notag \\
    & \leq 
    \int_{-L T_s}^{LT_s} 
      \Gamma^2\bigg(\frac{\alpha+3}{2}\bigg)\frac{T_s^2}{y^2} \, dt
      + 2 \int_{\frac{LT_s-\abs{x}}{y}}^{\infty} \mwlet_{\alpha }^2(t) \, dt
    \notag \\
    & \leq 
    2 \Gamma^2\bigg(\frac{\alpha+3}{2}\bigg) \frac{L T_s^3}{y^2}
    + 2 \int_{\frac{LT_s-\abs{x}}{y}}^{\infty} \frac{\Gamma^2(\frac{\alpha+1}{2})}{(1+ t^2)^{\frac{\alpha+1}{2}} } \, dt
    \notag \\
    & \leq 
    2 \Gamma^2\bigg(\frac{\alpha+3}{2}\bigg) \frac{L T_s^3}{y^2}
    + 2 \Gamma^2\bigg(\frac{\alpha+1}{2}\bigg) \int_{\frac{LT_s-\abs{x}}{y}}^{\infty} \frac{1}{ t^{\alpha+1} } \, dt
    \notag \\
    & =
    2 \Gamma^2\bigg(\frac{\alpha+3}{2}\bigg) \frac{L T_s^3}{y^2}
    + 2 \Gamma^2\bigg(\frac{\alpha+1}{2}\bigg)   \frac{y^{\alpha-1}}{ (\alpha-1)(LT_s-\abs{x})^{\alpha-1} }.
  \ea
  To bound this independently of $z$, we note that $y_{\textrm{min}} \leq y \leq y_{\textrm{max}}$.
  Furthermore, recall that $T_s=L^{-\frac{\alpha}{\alpha+2}}$ and thus our assumption $L\geq  (2 x_{\textrm{max}})^{\frac{\alpha+2}{2}}$ is equivalent to  $ x_{\textrm{max}} \leq LT_s/2$.
  Hence, noting that $L T_s^3 = (LT_s)^{-\alpha+1} = L^{-2\frac{\alpha-1}{\alpha+2}}$, we can further bound independently of $z\in D$
  \ba
    \sigma_{D,L}^2 
    & \leq
    2 \Gamma^2\bigg(\frac{\alpha+3}{2}\bigg) \frac{L T_s^3}{y_{\textrm{min}}^2}
    + 2 \Gamma^2\bigg(\frac{\alpha+1}{2}\bigg)   \frac{(2y_{\textrm{max}})^{\alpha-1}}{ (\alpha-1)(LT_s)^{\alpha-1} } 
    \notag \\
    & =
    \bigg(2 \Gamma^2\bigg(\frac{\alpha+3}{2}\bigg) \frac{1}{y_{\textrm{min}}^2}
    + 2 \Gamma^2\bigg(\frac{\alpha+1}{2}\bigg)   \frac{(2y_{\textrm{max}})^{\alpha-1}}{ (\alpha-1) } \bigg) L^{-2\frac{\alpha-1}{\alpha+2}}\,.
    \notag 
  \ea
\end{proof}

\begin{lemma}\label{lem:bounddl}
  Let $d_L(w,z) = \big( \E \big[ (F(w)- F(z) )^2 \big] \big)^{1/2}$ with $F(z)$ as defined in Lemma~\ref{lem:variance}.
  Furthermore, let  $T_s=L^{-\frac{\alpha}{\alpha+2}}$ and $LT_s = L^{\frac{2}{\alpha+2}} > 1$.
  Then on any compact set $D\subseteq \Pi^+$
  \ba
    d_L(w,z) 
    & \leq 
    \bigg(
    \frac{ \Gamma\big(\frac{\alpha+3}{2}\big) }{ y_{\textrm{min}}^2} 
    \big( 
     2 +  4 (x_{\textrm{max}}^2 + y_{\textrm{max}}^2) 
    \big)^{\frac{1}{2}}
    \notag \\*
    & \qquad 
    + \bigg(\frac{3\Gamma(\alpha +2)}{2^{\alpha+2} y_{\textrm{min}}}
    +  \frac{ 2 \Gamma(\alpha) }{(2y_{\textrm{min}})^{\alpha}}
    \bigg)^{\frac{1}{2}}
    \bigg) L^{ \frac{3}{ \alpha+2 }} \abs{w-z} 
  \ea
  for all $w,z \in D$, 
  where  $x_{\textrm{max}}:=   \sup_{u+iv\in D} \abs{u}$, $y_{\textrm{max}}:= \sup_{u+iv\in D}v$,  and $y_{\textrm{min}}:= \inf_{u+iv\in D}v$.
\end{lemma}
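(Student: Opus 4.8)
The plan is to turn the probabilistic metric into a deterministic $\LtR$ distance and then extract the Lipschitz factor $\abs{w-z}$ by differentiating in the parameter $z=x+iy$. Write $\Phi_z:=\transl_x\dil_y\mwlet_{\alpha}$ and let $P_{T_s}\phi:=\sum_{\ell=-L+1}^{L}\phi(\ell T_s)\,\ind_{((\ell-1)T_s,\ell T_s)}$ denote the piecewise-constant sampling operator, so that by \eqref{eq:wavelet_wn} and \eqref{eq:wtdisc} we have $F(z)=\mathcal{N}\big((P_{T_s}-I)\Phi_z\big)$. Linearity of $\mathcal{N}$ and the defining isometry $\norm{\mathcal{N}(s)}_G^2=\norm{s}^2$ (as already used in Lemma~\ref{lem:variance}) then give
\be
  d_L(w,z)^2=\bignorm{(P_{T_s}-I)(\Phi_w-\Phi_z)}_{\LtR}^2,
\ee
and by the triangle inequality I would split this into a \emph{sampling} contribution $\norm{P_{T_s}(\Phi_w-\Phi_z)}_{\LtR}$ and a \emph{continuous} contribution $\norm{\Phi_w-\Phi_z}_{\LtR}$; the two summands of the asserted bound will come from these two pieces.

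To produce the factor $\abs{w-z}$ I would enclose the (possibly nonconvex) set $D$ in the convex rectangle $R=[-x_{\textrm{max}},x_{\textrm{max}}]\times[y_{\textrm{min}},y_{\textrm{max}}]\subseteq\Pi^+$, so that the segment joining any $w,z\in D$ stays in $R$. The fundamental theorem of calculus then yields, pointwise in $t$, the bound $\abs{\Phi_w(t)-\Phi_z(t)}\leq\abs{w-z}\sup_{\zeta\in R}\bigabs{\nabla_{\zeta}\Phi_{\zeta}(t)}$, and the required gradients are explicit: with $u=\frac{t-x}{y}$ and $\mwlet_{\alpha}'=i\mwlet_{\alpha+2}$ (from the proof of Lemma~\ref{lem:variance}), one finds $\partial_x\Phi_z(t)=-y^{-3/2}\mwlet_{\alpha}'(u)$ and $\partial_y\Phi_z(t)=-y^{-3/2}\big(\tfrac12\mwlet_{\alpha}(u)+u\,\mwlet_{\alpha}'(u)\big)$. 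The only feature that matters is that $\partial_y\Phi$ carries the \emph{unbounded} factor $u$, whereas $\partial_x\Phi$ does not.

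For the sampling contribution I would expand $\norm{P_{T_s}(\Phi_w-\Phi_z)}_{\LtR}^2=T_s\sum_{\ell=-L+1}^{L}\bigabs{\Phi_w(\ell T_s)-\Phi_z(\ell T_s)}^2$, insert the pointwise mean-value bound, and estimate $\nabla_{\zeta}\Phi_{\zeta}$ at the nodes by the crude sup bounds $\abs{\mwlet_{\alpha}},\abs{\mwlet_{\alpha+2}}\leq\Gamma\big(\tfrac{\alpha+3}{2}\big)$ together with $\abs{u}\leq(\abs{s}+x_{\textrm{max}})/y_{\textrm{min}}$ at $s=\ell T_s$. Because the $u$-factor in $\partial_y\Phi$ then grows linearly in $\abs{s}$, the squared summand grows like $s^2$, and dominating the Riemann sum $T_s\sum_{\ell}$ by $\int_{-LT_s}^{LT_s}(\cdots)\,ds\asymp(LT_s)^3$ produces the decisive factor $(LT_s)^{3}=L^{\frac{6}{\alpha+2}}$; its square root is exactly the stated $L^{\frac{3}{\alpha+2}}$, and the accompanying constant collects $\Gamma\big(\tfrac{\alpha+3}{2}\big)$, the powers of $y_{\textrm{min}}$, and the geometric factor $2+4(x_{\textrm{max}}^2+y_{\textrm{max}}^2)$. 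For the continuous contribution I would instead retain the decay of $\mwlet_{\alpha+2}$ and compute $\norm{\Phi_w-\Phi_z}_{\LtR}\leq\abs{w-z}\sup_{\zeta\in R}\norm{\nabla_{\zeta}\Phi_{\zeta}}_{\LtR}$ honestly: after the substitution $t\mapsto u$ the relevant integrals are $\int(1+u^2)^{-\frac{\alpha+3}{2}}\,du$ and $\int u^2(1+u^2)^{-\frac{\alpha+3}{2}}\,du$, which reduce through the Beta function and the Legendre duplication formula to the integer-argument values $\Gamma(\alpha+2)$ and $\Gamma(\alpha)$ appearing in the second summand. Since the hypothesis $LT_s=L^{\frac{2}{\alpha+2}}>1$ forces $L^{\frac{3}{\alpha+2}}\geq1$, this $L$-independent constant may be multiplied by $L^{\frac{3}{\alpha+2}}$ at no cost, and the two pieces combine into the announced estimate.

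The main obstacle is precisely the interplay between discretization and the growth of $\partial_y\Phi$. The continuous theory alone would give a metric $d_L$ bounded uniformly in $L$, but the sampling operator forces one to control $\Phi_w-\Phi_z$ at $2L$ nodes spread over the whole window $[-LT_s,LT_s]$; discarding the decay of $\mwlet_{\alpha+2}$ there—which is what keeps the constant clean—inflates the estimate to $L^{\frac{3}{\alpha+2}}$. This loss is harmless for Theorem~\ref{th:approxdisc}, since the polynomial factor $L^{\frac{6}{\alpha+2}}$ it contributes to the covering-number constant $A^2$ in \eqref{eq:covnr} is ultimately absorbed by the super-exponential decay of $\Psi$ in \eqref{eq:boundoneprob}. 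Some care is also needed to dominate the nonmonotone summand by an integral—splitting at $s=0$ and using monotonicity of $(\abs{s}+x_{\textrm{max}})^2$ on each half-line—and to keep every constant uniform over $R$ rather than over $D$.
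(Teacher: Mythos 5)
Your proposal is correct and follows essentially the same route as the paper: the same triangle-inequality split of $d_L$ into a sampled and a continuous contribution, the same Lipschitz-in-the-parameter estimate at the $2L$ nodes whose quadratic growth in $\ell T_s$ yields the decisive $(LT_s)^3=L^{6/(\alpha+2)}$ factor, and the same use of $LT_s>1$ to absorb the $L$-independent continuous term into the $L^{3/(\alpha+2)}$ prefactor. The only divergence is in the treatment of the continuous summand, where the paper passes to the Fourier side via Plancherel and bounds real and imaginary parts separately (using Jensen's inequality for $(\cdot)^{\alpha+1}$), while you stay in the time domain with a gradient/Minkowski bound and Beta-function integrals; both yield constants of the same form involving $\Gamma(\alpha+2)$ and $\Gamma(\alpha)$.
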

\begin{proof}
 As a first step, we decompose $d_L(w,z)$ by the triangle inequality into
  \ba
    d_L(w,z)  & \leq \E \Big[ \bigabs{W_{\mwlet_{\alpha }}(\mathcal{N})(w)-   W_{\mwlet_{\alpha }}(\mathcal{N})(z) }^2 \Big]^{1/2}
    \notag \\
    & \quad +   
    \E \Big[  \bigabs{ W^{(d)}_{\mwlet_{\alpha }}(\mathcal{N}_{L,T_s})(w)- W^{(d)}_{\mwlet_{\alpha }}(\mathcal{N}_{L,T_s})(z)}^2 \Big]^{1/2}.
    \label{eq:decompd}
  \ea
  The second expectation in \eqref{eq:decompd} is bounded as
  \ba
    & \E \Big[  \bigabs{ W^{(d)}_{\mwlet_{\alpha }}(\mathcal{N}_{L,T_s})(w)- W^{(d)}_{\mwlet_{\alpha }}(\mathcal{N}_{L,T_s})(z) }^2 \Big]
    \notag \\
    & =
    \E \Bigg[ \biggabs{\sum_{\ell=-L+1}^L \mathcal{N}_{L,T_s}[\ell] \transl_u \dil_v \mwlet_{\alpha }(\ell T_s)
    - \mathcal{N}_{L,T_s}[\ell] \transl_x \dil_y \mwlet_{\alpha }(\ell T_s)
    }^2 \Bigg]
    \notag \\
    & =
    T_s \bigg(\sum_{\ell=-L+1}^L   \abs{\transl_u \dil_v \mwlet_{\alpha }(\ell T_s)}^2
    + \abs{\transl_x \dil_y \mwlet_{\alpha }(\ell T_s)}^2
    \notag \\*
    & \qquad - 2 \operatorname{Re} \big(\transl_u \dil_v \mwlet_{\alpha }(\ell T_s)\transl_x \dil_y \mwlet_{\alpha }(\ell T_s)\big)
    \bigg) 
    \notag \\
    & =
    T_s \bigg(\sum_{\ell=-L+1}^L   \abs{\transl_u \dil_v \mwlet_{\alpha }(\ell T_s) 
    - \transl_x \dil_y \mwlet_{\alpha }(\ell T_s)}^2
    \bigg) 
    \notag \\
    & \leq 
    T_s \Gamma^2\bigg(\frac{\alpha+3}{2}\bigg)\Bigg(\sum_{\ell=-L+1}^L   \bigg(
    \frac{\ell T_s -u}{v} - \frac{\ell T_s -x}{y}
    \bigg)^2
    \Bigg) 
    \notag \\
    & \leq 
    T_s \Gamma^2\bigg(\frac{\alpha+3}{2}\bigg)\Bigg(\sum_{\ell=-L+1}^L 
    \frac{2 \ell^2 T_s^2 (y-v)^2 + 2 (xv-uy)^2}{v^2y^2}
    \Bigg) 
    \notag \\
    & \leq 
    \frac{  L T_s}{v^2y^2} \Gamma^2\bigg(\frac{\alpha+3}{2}\bigg)
    \big( 
     2 L^2 T_s^2 (y-v)^2 +   \abs{z\bar{w}-\bar{z}w}^2 
    \big) 
    \notag \\
    & = 
    \frac{  L T_s}{v^2y^2} \Gamma^2\bigg(\frac{\alpha+3}{2}\bigg)
    \big( 
     2 L^2 T_s^2 (y-v)^2 +   \abs{z\bar{w}-z\bar{z}+ z\bar{z} - \bar{z}w}^2 
    \big) 
    \notag \\
    & \leq 
    \frac{  L T_s}{v^2y^2} \Gamma^2\bigg(\frac{\alpha+3}{2}\bigg)
    \big( 
     2 L^2 T_s^2 (y-v)^2 +  4\abs{z}^2 \abs{w-z}^2
    \big) 
    \notag \\
    & \leq 
    \frac{ \Gamma^2\big(\frac{\alpha+3}{2}\big) }{ y_{\textrm{min}}^4} 
    \big( 
     2 +  4 (x_{\textrm{max}}^2 + y_{\textrm{max}}^2) 
    \big)  L^{\frac{6}{\alpha+2}}  \abs{w-z}^2 \,.
    \label{eq:boundsecondexpec}
  \ea
  For the first summand in \eqref{eq:decompd}, we have
  \ba
    & \E \Big[ \bigabs{W_{\mwlet_{\alpha }}(\mathcal{N})(w)-   W_{\mwlet_{\alpha }}(\mathcal{N})(z) }^2 \Big]
    \notag \\
    & = 
    \int_{\R} \biggabs{\mwlet_{\alpha}\bigg(\frac{t-x}{y}\bigg) - \mwlet_{\alpha}\bigg(\frac{t-u}{v}\bigg)}^2 \, \mathrm{d}t
    \notag \\
    & =
    \int_{\R^+} \bigabs{
    ye^{-ix\xi}\widehat{\mwlet_{\alpha}}(y\xi) - ve^{-iu\xi}\widehat{\mwlet_{\alpha}}(v\xi) 
    }^2 \, \mathrm{d}\xi
    \notag \\
    & =
    \int_{\R^+} \bigabs{
      y^{\frac{\alpha+1}{2}}e^{-i(x-u)\xi} \xi^{\frac{\alpha-1}{2}}e^{-y \xi } 
      - v^{\frac{\alpha+1}{2}}  \xi^{\frac{\alpha-1}{2}}e^{-v \xi } 
      }^2 \, \mathrm{d}\xi
    \notag \\
    & =
    \int_{\R^+} \bigabs{
      y^{\frac{\alpha+1}{2}}(\cos((x-u)\xi) + i\sin((x-u)\xi)) \xi^{\frac{\alpha-1}{2}}e^{-y \xi } 
      \notag \\*
      & \qquad 
      - v^{\frac{\alpha+1}{2}}  \xi^{\frac{\alpha-1}{2}}e^{-v \xi } 
      }^2 \, \mathrm{d}\xi\,.
    \notag
  \ea
  Here, we split the modulus squared into real and imaginary parts and bound them separately.
  For the imaginary part squared, we obtain
  \ba
    & \int_{\R^+} 
      y^{\alpha+1}\sin^2((x-u)\xi) \xi^{ \alpha-1 }e^{-2y \xi }  
        \, \mathrm{d}\xi
    \notag \\
    & \leq
    \frac{(x-u)^2}{2^{\alpha+1}} \int_{\R^+} (2y\xi)^{\alpha+1}e^{-2y \xi }\, \mathrm{d}\xi
    \notag \\
    & =
    \frac{(x-u)^2}{2^{\alpha+2} y} \Gamma(\alpha +2)
    \notag \\
    & \leq
    \frac{\Gamma(\alpha +2)}{2^{\alpha+2} y_{\textrm{min}}} \abs{w-z}^2\,.
    \label{eq:boundimp}
  \ea
  For the real part squared, we have
  \ba
    &   \int_{\R^+}  
      \big(
      y^{\frac{\alpha+1}{2}}\cos((x-u)\xi)   \xi^{\frac{\alpha-1}{2}}e^{-y \xi } 
      - v^{\frac{\alpha+1}{2}}  \xi^{\frac{\alpha-1}{2}}e^{-v \xi } 
      \big)^2 \, \mathrm{d}\xi
    \notag \\
    & \leq
    \int_{\R^+}  2
      \xi^{ \alpha-1 } \big(
      y^{\frac{\alpha+1}{2}}\cos((x-u)\xi)   e^{-y \xi } 
      -y^{\frac{\alpha+1}{2}}    e^{-y \xi } 
      \big)^2
      \, \mathrm{d}\xi 
    \notag \\*
    & \quad
    +  \int_{\R^+}  2
      \xi^{ \alpha-1 }\big(
      y^{\frac{\alpha+1}{2}}    e^{-y \xi } 
      - v^{\frac{\alpha+1}{2}}   e^{-v \xi } 
      \big)^2
      \, \mathrm{d}\xi 
    \notag \\
    & \leq
    \int_{\R^+}  2
      \xi^{ \alpha+1 } y^{ \alpha+1 }e^{-2y \xi } 
      (x-u)^2
      \, \mathrm{d}\xi 
    \notag \\*
    & \quad
    +  \int_{\R^+}  2
      \xi^{ \alpha-1 }\big(
      y^{ \alpha+1}    e^{-2y \xi }
      - 2 (yv)^{\frac{\alpha+1}{2}}   e^{-(y+v) \xi }   
      + v^{ \alpha+1 }   e^{-2v \xi } 
      \big) 
      \, \mathrm{d}\xi 
    \notag \\
    & \leq
    \frac{(x-u)^2}{2^{\alpha+1}y}\Gamma(\alpha+2)
    +  \Gamma(\alpha) \bigg(
      \frac{y}{2^{\alpha-1}} - \frac{4 (yv)^{\frac{\alpha+1}{2}}}{(y+v)^{\alpha}}
      + \frac{v}{2^{\alpha-1}}
    \bigg)
    \notag \\
    & =
    \frac{(x-u)^2}{2^{\alpha+1}y}\Gamma(\alpha+2)
    +  4 \Gamma(\alpha) \bigg(
      \frac{ (\frac{y+v}{2})^{\alpha+1} - (yv)^{\frac{\alpha+1}{2}}}{ (y+v)^{\alpha}}
    \bigg)
    \notag \\
    & \stackrel{(a)}\leq 
    \frac{(x-u)^2}{2^{\alpha+1}y}\Gamma(\alpha+2)
    +  4 \Gamma(\alpha) 
      \frac{\frac{y^{\alpha+1}}{2} +\frac{v^{\alpha+1}}{2} - (yv)^{\frac{\alpha+1}{2}}}{ (y+v)^{\alpha}}
    \notag \\
    & =
    \frac{(x-u)^2}{2^{\alpha+1}y}\Gamma(\alpha+2)
    +  4 \Gamma(\alpha) 
      \frac{\Big(\frac{y^{\frac{\alpha+1}{2}}}{\sqrt{2}} - \frac{v^{\frac{\alpha+1}{2}}}{\sqrt{2}}\Big)^2}{ (y+v)^{\alpha}}
    \notag \\
    & \leq
    \bigg(\frac{ \Gamma(\alpha+2)}{2^{\alpha+1}y_{\textrm{min}}}
    +  \frac{ 2 \Gamma(\alpha) }{(2y_{\textrm{min}})^{\alpha}}
    \bigg) \abs{w-z}^2
    \label{eq:boundrealp}
  \ea
  where we used in $(a)$ Jensen's inequality for the convex function $(\cdot)^{\alpha+1}$.
  
  Combining the bounds \eqref{eq:boundsecondexpec}, \eqref{eq:boundimp}, and \eqref{eq:boundrealp} and noting that $  L^{\frac{3}{\alpha+2}}>1$, we obtain that
  \ba
    d_L(w,z) 
    & \leq 
    \bigg(
    \frac{ \Gamma\big(\frac{\alpha+3}{2}\big) }{ y_{\textrm{min}}^2} 
    \big( 
     2 +  4 (x_{\textrm{max}}^2 + y_{\textrm{max}}^2) 
    \big)^{\frac{1}{2}}
    \notag \\*
    & \qquad 
    + \bigg(\frac{\Gamma(\alpha +2)}{2^{\alpha+2} y_{\textrm{min}}}
    +
    \frac{ \Gamma(\alpha+2)}{2^{\alpha+1}y_{\textrm{min}}}
    +  \frac{ 2 \Gamma(\alpha) }{(2y_{\textrm{min}})^{\alpha}}
    \bigg)^{\frac{1}{2}}
    \bigg) L^{ \frac{3}{ \alpha+2 }} \abs{w-z} 
  \ea
  from which the result follows immediately.
\end{proof}

\begin{lemma}\label{lem:psimonoton}
  Let $\Psi(x) = \frac{1}{\sqrt{2\pi}}\int_x^{\infty} e^{-\frac{t^2}{2}} \mathrm{d}t$. Then for any $a>0$, $m>0$, and $k>0$ the function
  \be
    h(x) = x^{k}\Psi(a x^m)
  \ee
  is monotonically decreasing for all $x^{m } \geq \frac{k\sqrt{2\pi}}{2 a m}$.
\end{lemma}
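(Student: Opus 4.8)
The plan is to prove the claim by a direct computation of $h'(x)$ combined with a sharp Gaussian tail estimate. First I would record that $\Psi'(x) = -\frac{1}{\sqrt{2\pi}}e^{-x^2/2}$, so that the product and chain rules give
\be
  h'(x) = k x^{k-1}\Psi(a x^m) - \frac{am}{\sqrt{2\pi}}\, x^{k+m-1} e^{-\frac{a^2 x^{2m}}{2}}\,.
\ee
Since $x>0$ forces $x^{k-1}>0$, the desired inequality $h'(x)<0$ is equivalent, after dividing by $x^{k-1}$, to
\be
  k\,\Psi(a x^m) < \frac{am}{\sqrt{2\pi}}\, x^m e^{-\frac{a^2 x^{2m}}{2}}\,.
\ee

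The key ingredient is the elementary bound $\Psi(u) \le \frac{1}{2}e^{-u^2/2}$, valid for all $u\ge 0$ and strict for $u>0$. I would establish it by the substitution $t=u+s$ in the defining integral, writing $\Psi(u) = \frac{1}{\sqrt{2\pi}}\int_0^\infty e^{-\frac{(u+s)^2}{2}}\,\mathrm{d}s = \frac{1}{\sqrt{2\pi}}e^{-\frac{u^2}{2}}\int_0^\infty e^{-us}e^{-\frac{s^2}{2}}\,\mathrm{d}s$ and then using $e^{-us}\le 1$, which yields $\Psi(u)\le \frac{1}{\sqrt{2\pi}}e^{-\frac{u^2}{2}}\int_0^\infty e^{-\frac{s^2}{2}}\,\mathrm{d}s = \frac{1}{2}e^{-\frac{u^2}{2}}$.

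Applying this bound with $u = a x^m$ to the left-hand side of the reduced inequality gives $k\,\Psi(a x^m) \le \frac{k}{2}e^{-\frac{a^2 x^{2m}}{2}}$, so it suffices to verify $\frac{k}{2}e^{-\frac{a^2x^{2m}}{2}} \le \frac{am}{\sqrt{2\pi}}\, x^m e^{-\frac{a^2x^{2m}}{2}}$. The exponential factors cancel, and what remains is exactly $\frac{k}{2}\le \frac{am}{\sqrt{2\pi}}x^m$, i.e. $x^m \ge \frac{k\sqrt{2\pi}}{2am}$, which is the stated hypothesis; because the tail bound is strict for $u=ax^m>0$, the whole chain is strict and hence $h'(x)<0$ throughout the range, boundary included. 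I do not expect a genuine obstacle here: the only substantive step is the tail estimate $\Psi(u)\le \frac12 e^{-u^2/2}$, and the shift-and-bound argument above makes it painless, while everything else is routine bookkeeping with the product rule.
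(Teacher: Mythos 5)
Your proposal is correct and follows essentially the same route as the paper: compute $h'(x)$, reduce to a Gaussian tail inequality, and prove it via the shift $t = u+s$ and the bound $e^{-(u+s)^2/2} \leq e^{-u^2/2}e^{-s^2/2}$, which is exactly the paper's substitution step packaged as the standalone estimate $\Psi(u)\leq \tfrac12 e^{-u^2/2}$. The constant bookkeeping matches as well, since $\int_0^\infty e^{-s^2/2}\,\mathrm{d}s = \sqrt{2\pi}/2$ recovers the stated threshold $x^m \geq \frac{k\sqrt{2\pi}}{2am}$.
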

\begin{proof}
  We have  
  \be
      h'(x)=  k x^{k-1} \Psi(a x^m) - \frac{1}{\sqrt{2\pi}} x^{k} e^{-\frac{(ax^m)^2}{2}} a m x^{m-1}\,.
  \ee
  Thus, it remains to show that 
  \be
     \frac{1}{\sqrt{2\pi}} x^{k} e^{-\frac{(ax^m)^2}{2}} a m x^{m-1} >  k x^{k-1} \Psi(a x^m)
  \ee
  which is equivalent to
  \be
    \frac{a m}{k} x^{m } e^{-\frac{(ax^m)^2}{2}}  > \int_{a x^m}^{\infty} e^{-\frac{t^2}{2}} \mathrm{d}t\,.
  \ee
  Substitution results in 
  \be
    \int_{a x^m}^{\infty} e^{-\frac{t^2}{2}} \mathrm{d}t 
      = \int_{0}^{\infty} e^{-\frac{(s+a x^m)^2}{2}} \mathrm{d}s
      \leq  e^{-\frac{(ax^m)^2}{2}} \int_{0}^{\infty} e^{-\frac{s^2}{2}} \mathrm{d}s
  \ee
  and thus it is sufficient to have $\int_{0}^{\infty} e^{-\frac{s^2}{2}} \mathrm{d}s \leq \frac{a m}{k} x^{m }$ concluding the proof.
\end{proof}


\end{document}